\newtheorem{theorem}{Theorem}
\newtheorem{corollary}[theorem]{Corollary}
\newtheorem{lemma}[theorem]{Lemma}
\newtheorem{proposition}[theorem]{Proposition}
\newtheorem{problem}[theorem]{Problem}
\newtheorem{conjecture}[theorem]{Conjecture}
\theoremstyle{definition}
\newtheorem{definition}[theorem]{Definition}
\theoremstyle{remark}
\newtheorem*{remark}{Remark}
\newtheorem*{claim*}{Claim}
\crefname{claim}{Claim}{Claims}
\DeclarePairedDelimiter{\abs}{\lvert}{\rvert}
\newcommand{\la}{\lambda}
\newcommand{\las}{\lambda^*}
\newcommand{\eps}{\varepsilon}
\newcommand{\N}{\mathbb{N}}
\newcommand{\F}{\mathcal{F}}
\newcommand{\G}{\mathcal{G}}
\newcommand{\cH}{\mathcal{H}}
\newcommand{\R}{\mathbb{R}}
\newcommand{\Gp}{\mathcal{G}_p}
\newcommand{\nabd}{N_{\alpha,\beta}(d)}
\newcommand{\mult}{\mathrm{mult}}
\newcommand{\sset}[1]{\left\{#1\right\}}
\newcommand{\dset}[2]{\sset{#1 \colon #2}}
\newcommand{\rowingrefs}{(\ref{eqn:rowing-b},\ref{eqn:rowing-b-1},\ref{eqn:rowing-b-2})}
\newcommand{\rowingdrefs}{(\ref{eqn:rowing-d-1}, \ref{eqn:rowing-d-2})}
\definecolor{litegray}{RGB}{192,192,192}
\tikzstyle{vertex}=[circle,draw,fill=white,inner sep=0.5pt,minimum width=4pt]
\tikzstyle{root-vertex}=[circle,draw,fill=darkgray,inner sep=0.5pt,minimum width=4pt,text=white]
\title{On the smallest eigenvalues of $3$-colorable graphs}
  \author{Anonymized for peer review}
\author{
    Zilin Jiang\thanks{School of Mathematical and Statistical Sciences, and School of Computing and Augmented Intelligence, Arizona State University, Tempe, AZ 85281. Email: {\tt zilinj@asu.edu}. Supported in part by the Simons Foundation through its Travel Support for Mathematicians program and by U.S.\ taxpayers through NSF grant 2451581.}
    \and Zhiyu Wang\thanks{Department of Mathematics, Louisiana State University, Baton Rouge, LA 70803. Email: {\tt zhiyuw@lsu.edu}. Supported in part by the Louisiana Board of Regents through grant LEQSF(2024-27)-RD-A-16.}
}
\date{}
\begin{document}

\maketitle

\begin{abstract}
    We prove that the set of the smallest eigenvalues attained by $3$-colorable graphs is dense in $(-\infty, -\lambda^*)$, where $\lambda^* = \rho^{1/2} + \rho^{-1/2} \approx 2.01980$ and $\rho$ is the positive real root of $x^3 = x + 1$. As a consequence, in the context of spherical two-distance sets, our result precludes any further refinement of the forbidden-subgraph method through the chromatic number of signed graphs.
\end{abstract}

\section{Introduction}

Motivated by the problem of estimating the maximum size of spherical two-distance sets with prescribed inner products, Jiang and Polyanskii \cite{JP24} studied certain families of signed graphs within the class $\Gp$ of $p$-colorable signed graphs. Here, a \textit{signed graph} is a graph $G$ whose edges are each labeled by $+$ or $-$, and a signed graph $G$ is \emph{$p$-colorable} if there exists a $p$-coloring of $V(G)$ such that the endpoints of every negative edge receive identical colors, and the endpoints of every positive edge receive different colors.

\begin{remark}
    Our coloring notion is the sign-reversed version of Cartwright--Harary \cite{CH68}, where negative edges must join vertices of \emph{different} colors and positive edges of \emph{identical} colors. We choose the current definition because we need to discuss the (unsigned) graph coloring as well, and the current definition is a generalization of that. By flipping the signs of all edges, one can go back and forth between the two different definitions, as we will do when restating various results from \cite{JP24,JTYZZ23}.
\end{remark}

We now introduce the key families of signed graphs. Given $\la \in \R$, let $\Gp(\la)$ denote the family of $p$-colorable signed graphs whose smallest eigenvalues are at least $-\la$. The \emph{smallest eigenvalue} of a signed graph $G$, denoted by $\la_1(G)$, is the smallest eigenvalue of its signed adjacency matrix.

The Cauchy interlacing theorem implies that $\Gp(\la)$ is closed under taking induced subgraphs. Jiang and Polyanskii asked in \cite{JP24} whether it is possible to define each of these families by a \emph{finite} set of forbidden subgraphs.

\begin{definition}
    Given a class $\G$ of signed graphs, and a family $\cH \subseteq \G$ that is closed under taking induced subgraphs, a set $\F \subseteq \G$ of signed graphs is a \textit{forbidden subgraph characterization} of $\cH$ within $\G$ if $\cH$ consists exactly of signed graphs in $\G$ that do not contain any member of $\F$ as an induced subgraph.    
\end{definition}

\begin{problem}[Problem 5.3 of Jiang and Polyanskii \cite{JP24}] \label{prob:main}
    For every $p \in \N^+$, determine the set of $\la \in \R$ for which $\Gp(\la)$ has a finite forbidden subgraph characterization within $\Gp$.
\end{problem}

For $p \in \sset{1,2}$, \cref{prob:main} is essentially resolved by \cite[Theorem~1]{JP20}. In this paper, we completely resolve the rest of \cref{prob:main} as follows.

\begin{theorem} \label{thm:main-forb}
    For every $p \ge 3$, within the class $\Gp$ of $p$-colorable signed graphs, the family $\Gp(\la)$ of signed graphs with smallest eigenvalues at least $-\la$ has a finite forbidden subgraph characterization if and only if $\la < \las$, where
    \[
        \las := \rho^{1/2} + \rho^{-1/2} \approx 2.01980,
    \]
    and $\rho$ is the unique real root of $x^3 = 1 + x$.
\end{theorem}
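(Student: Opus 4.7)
The plan is to split the equivalence into its two directions and to use the density theorem advertised in the abstract as the engine behind the harder (``only if'') half. Throughout, I use that every unsigned graph may be regarded as a signed graph all of whose edges carry the sign $+$, under which a proper $p$-coloring in the graph-theoretic sense is a $p$-coloring in the signed-graph sense; in particular every $3$-colorable unsigned graph belongs to $\Gp$ whenever $p\ge 3$.

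\paragraph{Direction 1: $\la\ge\las$ implies no finite characterization.}
Suppose, toward contradiction, that $\la\ge\las$ and that some finite family $\F\subseteq\Gp$ characterizes $\Gp(\la)$ by forbidden induced subgraphs. The plan is to produce an infinite set of pairwise non-isomorphic minimal forbidden subgraphs, contradicting the finiteness of $\F$. Invoking the density theorem, I choose an infinite sequence $G_1,G_2,\ldots$ of $3$-colorable (unsigned) graphs with pairwise distinct smallest eigenvalues $\la_1(G_n)<-\la$ such that $\la_1(G_n)\to-\la$ from below; this is possible because the set of attained smallest eigenvalues is dense in $(-\infty,-\las)$, and the target interval $(-\infty,-\la)$ has $-\la$ as its right endpoint inside this dense set or on its boundary. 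For each $n$, repeatedly delete vertices from $G_n$ while the condition $\la_1<-\la$ persists to obtain an induced subgraph $H_n\subseteq G_n$ which is minimal forbidden. If only finitely many non-isomorphic $H_n$ were produced, there would exist a uniform bound $c^*<-\la$ with $\la_1(H_n)\le c^*$ for every $n$; Cauchy interlacing then yields $\la_1(G_n)\le\la_1(H_n)\le c^*$ for all $n$, contradicting $\la_1(G_n)\to -\la>c^*$. Hence infinitely many distinct minimal forbidden signed graphs exist in $\Gp$, ruling out any finite $\F$.

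\paragraph{Direction 2: $\la<\las$ implies a finite characterization.}
Here the plan is to show that every minimal forbidden signed graph $H\in\Gp$ (i.e.\ $\la_1(H)<-\la$ while $\la_1(H-v)\ge-\la$ for every vertex $v$) has order bounded by some $N=N(\la,p)$; since there are only finitely many signed graphs of bounded order, this yields finiteness. The input is a structural/compactness fact in the opposite direction of the density theorem, namely that $-\las$ is the largest limit point from the left of $\dset{\la_1(H)}{H\in\Gp}$, so the set of achievable smallest eigenvalues of signed graphs in $\Gp$ has only finitely many values above $-\las$ within any closed interval $[-\las,-\la]$; this is already in the toolkit of Jiang--Polyanskii~\cite{JP24}. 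From this, the plan is to use the standard argument (similar to Hoffman's limit-point analysis and to the minimal-forbidden-subgraph analysis used for generalized line graphs) that a sufficiently large minimal forbidden signed graph would, after deleting a vertex, yield a signed graph whose smallest eigenvalue lies in the impossible gap $(-\las,-\la)$, a contradiction.

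\paragraph{Main obstacle.}
The necessity direction is essentially a clean application of the density theorem together with interlacing, so the principal difficulty is the sufficiency direction. The hard part is rigorously translating the ``no limit point of smallest eigenvalues above $-\las$'' statement into a uniform bound on the order of minimal forbidden subgraphs, which requires a compactness argument on a family of signed graphs that is not itself bounded in order. The natural route is to assume an infinite sequence of minimal forbidden subgraphs of unbounded order, pass to a suitable local limit, and derive an infinite signed graph witnessing a smallest-eigenvalue accumulation at a point strictly above $-\las$, contradicting the density theorem's complement.
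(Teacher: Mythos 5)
Your first direction ($\la \ge \las$) is correct and is essentially the paper's argument: both reduce to the density statement (\Cref{thm:main}) plus Cauchy interlacing. The paper phrases it as a gap --- a finite $\F$ would force every $p$-colorable signed graph with $\la_1 < -\la$ to satisfy $\la_1 \le \max_{F\in\F}\la_1(F) < -\la$, leaving an interval that \Cref{thm:main} shows is not empty of eigenvalues --- while you pass through minimal forbidden subgraphs. One step you leave implicit: you need that every minimal forbidden subgraph actually belongs to $\F$ (it does, because each $F \in \F$ has $\la_1(F) < -\la$, so the member of $\F$ sitting inside a minimal $H_n$ must be $H_n$ itself), or you can skip the $H_n$ entirely and interlace against $\F$ directly.

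The genuine gap is in your second direction ($\la < \las$). The paper does not prove this from scratch: it invokes Theorem~1.5 of Jiang--Polyanskii \cite{JP24} (restated as \Cref{thm:signed-forb}), which already gives a finite forbidden subgraph characterization of $\G(\la)$ within the class of \emph{all} signed graphs, and then restricts that finite family to $\Gp$ (members outside $\Gp$ can never occur as induced subgraphs of graphs in $\Gp$, since $\Gp$ is closed under induced subgraphs). Your proposed replacement has two problems. First, the ``input'' you invoke is false: by \Cref{lem:f1}, the smallest eigenvalues of the (bipartite, hence $p$-colorable) trees $(F_1, 0^n)$ decrease to $-\las$, i.e.\ they accumulate at $-\las$ \emph{from above}, so for every $\la \in (2,\las)$ the closed interval $[-\las,-\la]$ contains infinitely many attained values; moreover $-2 > -\las$ is also a limit point of smallest eigenvalues, so $-\las$ is not the largest limit point either. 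Second, even a correct gap/discreteness statement does not by itself bound the order of minimal forbidden subgraphs: one must control how $\la_1(H-v)$ compares to $\la_1(H)$ for large $H$, and that structural compactness step --- which you explicitly defer to a ``suitable local limit'' --- is precisely the hard content of the cited theorem, not a routine argument. As written, this half of your proof is a plan rather than a proof; the fix is simply to quote \Cref{thm:signed-forb} and restrict to $\Gp$, as the paper does.
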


The key ingredient is the following construction of $3$-colorable graphs.

\begin{theorem} \label{thm:main}
    For every $\la > \las$ and every $\eps > 0$, there exists a $3$-colorable graph $G$ such that $\la_1(G) \in (-\la - \eps, -\la)$.
\end{theorem}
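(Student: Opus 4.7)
The plan is to prove \Cref{thm:main} by exhibiting a family of 3-colorable graphs whose smallest eigenvalues form a dense subset of $(-\infty, -\las)$. Given any target $\la > \las$ and tolerance $\eps > 0$, I will then select a member of the family whose smallest eigenvalue lies in $(-\la - \eps, -\la)$. This is in the spirit of Shearer's classical density theorem for tree spectral radii, but adapted to the 3-colorable setting; the improvement from Shearer's threshold $\sqrt{2 + \sqrt{5}} \approx 2.058$ down to the smaller $\las \approx 2.020$ should come from the additional flexibility of attaching short odd-cycle (triangle) gadgets on top of a tree/path skeleton.

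Concretely, I consider a family of infinite 3-colorable graphs $\{G^\infty_{\mathbf{n}}\}$ obtained by attaching small 3-colorable ``gadgets'' (triangles, pendant paths, or short cycles) at positions specified by a sequence $\mathbf{n}$ of positive integers along a long path skeleton. Three-colorability of each $G^\infty_{\mathbf{n}}$ extends from the bipartition of the skeleton by painting the new gadget vertices with the third color. For each such infinite graph, the edge of the essential spectrum can be analyzed via a transfer-matrix/Chebyshev computation: writing $\mu = -(q + q^{-1})$ with $|q| < 1$ on the decaying path modes and imposing the local modifications at the gadget sites as effective boundary conditions, one obtains an algebraic equation whose appropriate root gives $\mu(\mathbf{n})$. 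Finite truncations $G^L_{\mathbf{n}}$ of length $L$ then have smallest eigenvalues satisfying $\la_1(G^L_{\mathbf{n}}) > -\mu(\mathbf{n})$ with $\la_1(G^L_{\mathbf{n}}) \to -\mu(\mathbf{n})$ as $L \to \infty$, by a standard exponential-decay argument for Perron-type eigenvectors.

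Granting a density statement of the form ``the set $\{\mu(\mathbf{n})\}$ is dense in $(\las, \infty)$'', the theorem follows: for the target $\la > \las$ and $\eps > 0$, first pick $\mathbf{n}$ with $\mu(\mathbf{n}) \in (\la, \la + \eps/2)$, and then pick $L$ large enough that $\la_1(G^L_{\mathbf{n}}) \in (-\mu(\mathbf{n}), -\mu(\mathbf{n}) + \eps/2)$. A quick check yields $\la_1(G^L_{\mathbf{n}}) > -\mu(\mathbf{n}) > -\la - \eps$ and $\la_1(G^L_{\mathbf{n}}) < -\mu(\mathbf{n}) + \eps/2 < -\la$, as required. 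The choice $G := G^L_{\mathbf{n}}$ is 3-colorable by construction.

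The hardest part is establishing the density claim in the previous paragraph, specifically that the sharp lower threshold is precisely $\las$. The extremal configuration should be the ``densest admissible'' gadget pattern whose transfer-matrix characteristic equation reduces, under the substitution $\mu = -(\rho^{1/2} + \rho^{-1/2})$, to the plastic equation $\rho^3 = \rho + 1$; this is why $\las$, rather than an unrelated algebraic number, governs the threshold. Showing that spacing out or perturbing this critical pattern yields edges $\mu(\mathbf{n})$ that densely sweep out $(\las, \infty)$ will require a combinatorial enumeration of admissible sequences combined with a continuity/monotonicity analysis of the transfer-matrix eigenvalue as a function of the spacing parameters. A secondary (and routine) obstacle is verifying the monotone convergence $\la_1(G^L_{\mathbf{n}}) \searrow -\mu(\mathbf{n})$ with quantitative rate, which uses interlacing and the exponential decay $|q|^L$ of the extremal eigenvector along the skeleton.
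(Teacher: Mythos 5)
Your high-level setup is aligned with the paper's: both build $3$-colorable graphs by attaching small gadgets along a long path skeleton (the paper's ``rowing graphs'' attach cliques of size $0$ or $1$ at consecutive path vertices), both use a path-extension/transfer-matrix computation to pass to the limit (the paper's \Cref{lem:path-ext}), and both correctly locate $\las$ as arising from the plastic number $\rho$ with $\rho^3 = \rho + 1$ via Hoffman's limit. The reduction from ``density of limit values $\mu(\mathbf{n})$'' to the finite statement via monotone truncations is also sound and matches the paper's use of $\la_1(F, a0^\infty) = \lim_n \la_1(F, a0^n)$.

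However, there is a genuine gap, and it is precisely the one you flag as ``the hardest part'': you never prove that the set $\{\mu(\mathbf{n})\}$ is dense in $(\las, \infty)$, and that claim \emph{is} the theorem (up to the routine truncation step). Saying it ``will require a combinatorial enumeration of admissible sequences combined with a continuity/monotonicity analysis'' is not an argument — there is no continuous parameter to vary (the gadget sequences are discrete), so naive continuity does not give density, and monotonicity alone cannot rule out spectral gaps. What the paper actually does here is quite delicate and does not fit the ``continuity/monotonicity'' template: it fixes three specific rooted graphs $F_1, F_2, F_3$ whose binary rowing graphs together cover $(-\la', -\las)$ (\Cref{lem:cover}), then argues by contradiction. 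Assuming no binary rowing graph of $F$ has $\la_1$ in $(-\la-\eps,-\la)$, it defines a set $S$ of length-$(m+\ell)$ binary strings whose rowing graphs (after appending zeros) dip below $-\la$, takes the lexicographic minimum $ab$ of $S$, uses two imported insertion/truncation lemmas (\Cref{lem:l,lem:m}) to force $b = 0^\ell$, and then invokes a discrete replacement inequality $\la_1(F, \tilde a 0 1^3 0^\infty) \le \la_1(F, \tilde a 1 0^\infty)$ (\Cref{lem:one-to-three}) to contradict lexicographic minimality. That replacement lemma — which substitutes a single ``$1$'' by ``$01^3$'' and compares the resulting Schur complements $\alpha$ versus $\beta$ via Sturm's theorem — is the engine that makes the eigenvalues sweep the interval, and nothing in your proposal substitutes for it. Until you supply a concrete mechanism of this kind (or a genuinely different one), the proposal establishes only the easy framing of the problem, not the theorem.
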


The rest of the paper is organized as follows. \cref{sec:forb} derives \cref{thm:main-forb} from \cref{thm:main}. \cref{sec:rowing} proves \cref{thm:main}, deferring three technical results to \cref{sec:technical}. Finally, \cref{sec:two-distance} revisits the spherical two-distance set motivation.

\section{Forbidden subgraph characterization} \label{sec:forb}

We first briefly explain why \cref{prob:main} for $p \in \sset{1,2}$ is essentially resolved before.

\begin{theorem}[Theorem 1 of Jiang and Polyanskii \cite{JP20}] \label{thm:forb-sprad}
    For every integer $m \ge 2$, let $\beta_m$ be the largest root of $x^{m+1} = 1 + x + \dots + x^{m-1}$, and let $\alpha_m := \beta_m^{1/2} + \beta_m^{-1/2}$. The family of graphs with spectral radius at most $\la$ has a finite forbidden subgraph characterization (within the class of graphs) if and only if $\la < \la'$ and $\la \notin \sset{\alpha_2, \alpha_3, \dots}$, where
    \[
        \la' := \lim_{m\to\infty} \alpha_m = \varphi^{1/2} + \varphi^{-1/2} = \sqrt{2 + \sqrt{5}} \approx 2.05817,
    \]
    and $\varphi$ is the golden ratio $(1+\sqrt5)/2$. \qed
\end{theorem}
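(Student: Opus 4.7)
The plan is to construct, for each target $\la > \las$, an explicit family of 3-colorable graphs whose smallest eigenvalues fill a dense set strictly below $-\las$. The appearance of $\las = \rho^{1/2} + \rho^{-1/2}$ with $\rho$ satisfying $x^3 = x+1$ is the signature of a length-3 transfer operator: it is the threshold at which a period-3 linear recurrence transitions from geometric decay to bounded oscillation. This strongly suggests building graphs from repeated \emph{period-3 tripartite slices} stacked along a path, which is presumably the \emph{rowing} construction developed in \Cref{sec:rowing}.

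First, I would identify a small 3-colored gadget $F$ that, when concatenated along a path, yields a 3-colorable ``ladder'' $L_n$ of length $n$, with the 3-coloring cycling through $\{1,2,3\}$ so that $F$ can be attached to its next copy consistently. The adjacency matrix of $L_n$ has a block-Toeplitz form whose symbol is a $3 \times 3$ matrix pencil in the spectral parameter $\la$. Standard analysis of such pencils shows that the limiting spectrum as $n\to\infty$ is precisely the set of $\la$ for which the associated companion matrix has an eigenvalue on the unit circle, and a direct computation reduces this condition to $x^3 = x + 1$, pinpointing $-\las$ as the accumulation point from below.

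Next, I would attach such ladders to a fixed tripartite \emph{seed} $H$ chosen so that $\la_1(H) < -\la$; by Cauchy interlacing any graph built by attaching rows to $H$ still has smallest eigenvalue at most $\la_1(H)$. Let $G_{n,k}$ denote the graph obtained from $H$ by attaching $k$ rowed ladders of length $n$, each attachment extending the coloring of $H$. A secular-equation / Schur-complement computation expresses the characteristic polynomial of $G_{n,k}$ as a polynomial in the Chebyshev-like quantities coming from the transfer matrix, so that for fixed $k$ the smallest eigenvalue $\la_1(G_{n,k})$ drifts monotonically as $n\to\infty$ to a limit $\mu_k$, and as $k$ varies the limits $\mu_k$ themselves approach $-\las$ from below. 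To land in a window $(-\la-\eps, -\la)$, I would first pick $k$ so $\mu_k$ lies within $\eps/2$ of $-\la$, then pick $n$ large so $\la_1(G_{n,k})$ lies within $\eps/2$ of $\mu_k$.

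The main obstacle is ruling out \emph{jumps}: between consecutive values of $n$, $\la_1(G_{n,k})$ changes discretely and might leapfrog over the target window. Making the argument quantitative requires bounding $|\la_1(G_{n+1,k}) - \la_1(G_{n,k})|$ uniformly and showing this bound tends to $0$ as $n \to \infty$, which typically reduces to a lower bound on the derivative of the secular polynomial at $-\la$; this is where I expect the two technical lemmas deferred to \Cref{sec:technical} to be needed. A secondary obstacle is choosing the seed $H$ itself: it must be 3-colorable, compatible with the rowing attachment, and have its own smallest eigenvalue genuinely below every $-\la$ of interest (e.g.\ by taking $H$ to contain an arbitrarily large 3-colorable ``excess'' configuration). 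Finally, 3-colorability of each $G_{n,k}$ is straightforward from the construction, provided the coloring of $F$ is compatible at each attachment point with the coloring of $H$; this must be verified but is routine.
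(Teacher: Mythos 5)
Your proposal does not address the statement you were asked to prove. Theorem~\ref{thm:forb-sprad} is a result about \emph{unsigned} graphs and their \emph{spectral radii}: it asserts that the family of graphs with spectral radius at most $\la$ has a finite forbidden subgraph characterization if and only if $\la < \la' = \sqrt{2+\sqrt 5}$ and $\la$ avoids the discrete exceptional set $\sset{\alpha_2,\alpha_3,\dots}$. What you have outlined instead is a construction of $3$-colorable graphs with smallest eigenvalues dense just below $-\las$, i.e.\ an attack on Theorem~\ref{thm:main}. Nothing in your sketch engages with the actual content of Theorem~\ref{thm:forb-sprad}: the threshold $\la'$ rather than $\las$, the numbers $\alpha_m = \beta_m^{1/2}+\beta_m^{-1/2}$, or the two-sided ``if and only if'' structure. (Note also that the paper itself does not prove this theorem; it is imported verbatim from \cite{JP20} and stated with a \verb|\qed|.)

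Even if one tried to repurpose your argument, it could at best supply one half of one direction: a density construction showing that no finite characterization exists for $\la \ge \la'$. The theorem additionally requires (a) the \emph{positive} direction, namely that for every $\la < \la'$ with $\la \notin \sset{\alpha_2,\alpha_3,\dots}$ a finite forbidden subgraph characterization does exist, which is a finiteness/structure argument entirely absent from your plan; and (b) an explanation of why the isolated values $\alpha_m$ below $\la'$ are exceptions, which comes from the fact that each $\alpha_m$ is a limit point of spectral radii of specific trees approached from one side only --- a phenomenon with no analogue in your period-$3$ transfer-matrix heuristic, which is calibrated to $x^3 = x+1$ and the constant $\las$ rather than to $x^{m+1} = 1+x+\dots+x^{m-1}$ and the constants $\beta_m$. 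You should either cite \cite{JP20} for this statement, as the paper does, or start over with an argument that actually concerns spectral radii and both directions of the equivalence.
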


To reduce the case where $p = 2$ to $p = 1$, we rely on a useful tool in spectral graph theory for signed graphs --- two signed graphs are \emph{switching equivalent} if one graph can be obtained from the other by reversing all the edges in a cut-set. An important feature of switching equivalence is that the switching equivalent signed graphs all have the same spectrum.

\begin{corollary}
    For $p \in \sset{1,2}$, within the class $\Gp$ of $p$-colorable signed graphs, the family $\Gp(\la)$ of signed graphs with smallest eigenvalues at least $-\la$ has a finite forbidden subgraph characterization if and only if $\la < \la'$ and $\la \not\in \sset{\alpha_2, \alpha_3, \dots}$.
\end{corollary}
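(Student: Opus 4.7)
The plan is to reduce both cases $p \in \sset{1,2}$ to \Cref{thm:forb-sprad} through a natural correspondence between $p$-colorable signed graphs and unsigned graphs. For $p=1$, a signed graph admits a $1$-coloring exactly when every edge is negative, since the sole color forces identical colors at the endpoints of every edge. Thus taking underlying graphs gives a bijection $\mathcal{G}_1 \to \sset{\text{graphs}}$ under which the signed adjacency matrix becomes $-A$, so $\la_1(G) \ge -\la$ translates to $\rho(\bar G) \le \la$ while the induced-subgraph relation is preserved. The case $p=1$ therefore follows immediately from \Cref{thm:forb-sprad}.

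For $p=2$, I would invoke switching equivalence. The first step is to verify that a signed graph is $2$-colorable if and only if it is switching equivalent to an all-negative signed graph: switching on the cut defined by a $2$-coloring converts every positive (cut) edge to negative while leaving within-class negative edges unchanged, and reversing this computation recovers a $2$-coloring from any switching that produces an all-negative graph. Since switching preserves the spectrum, each $G \in \mathcal{G}_2$ has a well-defined underlying unsigned graph $U(G)$ with $\la_1(G) = -\rho(U(G))$, and switching commutes with restriction to induced subgraphs.

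For the ``if'' direction, given the finite family $\F'$ of forbidden unsigned graphs for $\sset{\rho \le \la}$ supplied by \Cref{thm:forb-sprad}, I would take $\F_2 \subseteq \mathcal{G}_2$ to consist of all $2$-colorable signings of members of $\F'$; this is finite since each $H \in \F'$ has at most $2^{\abs{E(H)}}$ signings. Cauchy interlacing together with the observation that $G[S]$ is switching equivalent to the all-negative signing of $U(G)[S]$ then yields $\mathcal{G}_2(\la) = \dset{G \in \mathcal{G}_2}{G \text{ contains no induced } F \in \F_2}$. For the ``only if'' direction, a finite family $\F_2$ for $\mathcal{G}_2(\la)$ produces the finite family $\dset{U(F)}{F \in \F_2}$ for $\sset{\rho \le \la}$: any unsigned graph $\bar G$ with $\rho(\bar G) > \la$ witnessing failure of the latter yields, via its all-negative signing, a counterexample to $\F_2$, since an all-negative signed graph only contains all-negative induced subgraphs. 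The main obstacle is the routine but careful bookkeeping around how switching interacts with induced-subgraph restriction.
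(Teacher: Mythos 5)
Your proposal is correct and takes essentially the same approach as the paper: for $p=1$ you translate via $G \mapsto -G$ to the spectral-radius problem of \Cref{thm:forb-sprad}, and for $p=2$ you use the fact that $2$-colorability is equivalent to being switching equivalent to an all-negative signed graph, plus spectrum-invariance of switching, to transfer a finite forbidden family back and forth. The paper routes the $p=2$ reduction through $\G_1$ (taking $\F_2$ to be all signed graphs switching equivalent to members of $\F_1$) rather than directly through unsigned graphs, but since all-negative signed graphs are exactly $\G_1$ these are the same construction.
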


\begin{proof}
    Notice that for every signed graph $G$, it is $1$-colorable if and only if $-G$ is unsigned, and $\la_1(G) \ge -\la$ if and only if the largest eigenvalue of $-G$ is at most $\la$. Under this correspondence between $\G_1$ and the class of unsigned graphs, the family $\G_1(\la)$ has a finite forbidden subgraph characterization within $\G_1$ if and only if the family of graphs with spectral radius at most $\la$ has a finite forbidden subgraph characterization. Therefore, the case where $p = 1$ reduces to that in \cref{thm:forb-sprad}.

    By restricting signed graphs in $\G_2$ to $\G_1$, we observe that if $\G_2(\la)$ has a finite forbidden subgraph characterization within $\G_2$, then so does $\G_1(\la)$ within $\G_1$. Conversely, suppose that $\G_1(\la)$ has a finite forbidden subgraph characterization $\F_1$ within $\G_1$. Let $\F_2$ be the family of signed graphs that are switching equivalent to signed graphs in $\F_1$. Using the fact that every $2$-colorable signed graph is switching equivalent to a $1$-colorable signed graph, one can check that $\F_2$ is a finite forbidden subgraph characterization of $\G_2(\la)$ within $\G_2$.
\end{proof}

The rest of the section focuses on solving \cref{prob:main} for $p \ge 3$.

\begin{theorem}[Theorem 1.5 of Jiang and Polyanskii \cite{JP24}] \label{thm:signed-forb}
    The family $\G(\la)$ of signed graphs with smallest eigenvalue at least $-\la$ has a finite subgraph characterization (within the class of signed graphs) if and only if $\la < \las$. \qed
\end{theorem}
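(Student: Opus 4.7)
The plan is to prove both directions of the biconditional separately, paralleling the unsigned spectral-radius result \Cref{thm:forb-sprad} but adapted to smallest eigenvalues of signed graphs.

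For the ``only if'' direction (no finite characterization when $\la \geq \las$), the goal is to exhibit forbidden signed graphs whose smallest eigenvalues accumulate at $-\las$ from below. The strategy is to first establish a density statement: the set of smallest eigenvalues achieved by signed graphs is dense in $(-\infty, -\las)$. This can be accomplished by constructing an explicit one-parameter family --- for instance, signed caterpillars or signed paths decorated with carefully placed pendants --- whose smallest eigenvalues are tracked by a transfer-matrix recurrence along the spine. The characteristic equation $\rho^3 = 1 + \rho$ should emerge as the limiting growth equation of the recurrence, explaining why $\las = \rho^{1/2} + \rho^{-1/2}$ is the threshold. A finite forbidden family $\F$ would force the maximum over $H \in \F$ of $\la_1(H)$ to be strictly less than $-\la$, and any forbidden signed graph $G$ would satisfy $\la_1(G) \leq \max_{H \in \F} \la_1(H)$ by Cauchy interlacing; this gap contradicts the density statement.

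For the ``if'' direction (finite characterization when $\la < \las$), the goal is a uniform vertex bound on every minimal forbidden signed graph of $\G(\la)$. The subrange $\la \leq 2$ is covered by the Cameron--Goethals--Seidel--Shult classification of signed graphs with smallest eigenvalue at least $-2$ via root systems of types $A$, $D$, $E$, which produces only finitely many minimal forbidden configurations. For the intermediate range $2 < \la < \las$, I would extend the argument by combining a generalized-line-graph decomposition with a Perron/eigenvector analysis at the ``exceptional'' (non-line-graph) vertices, using the positive gap $\las - \la$ to bound the number and structure of such exceptional vertices. A uniform vertex bound on minimal forbidden subgraphs then delivers the finite characterization.

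The main obstacle is the ``if'' direction above the classical threshold $\la = 2$: pushing root-system methods into the range $2 < \la < \las$ is genuinely delicate, as exceptional configurations can no longer be enumerated directly from Dynkin diagrams, and one needs a quantitative tool --- such as an interlacing argument calibrated to the gap $\las - \la$ --- to control how many exceptional vertices can coexist. The ``only if'' direction is comparatively routine once the correct signed-caterpillar family with the right transfer matrix is identified.
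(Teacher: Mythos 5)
The paper does not supply a proof of this statement: the appended end-of-proof mark signals that it is imported verbatim from Jiang and Polyanskii \cite{JP24}, so there is no in-paper argument to compare yours against; I will judge the sketch on its own merits. Your ``only if'' half is sound in outline. The engine is indeed density of the set of attained smallest eigenvalues in $(-\infty, -\las)$: Shearer's caterpillars cover $\la \ge \la'$, and \cite{JP24} fills the remaining range $(\las, \la')$ with rowing graphs --- a long path with cliques glued along its edges and a short pendant at one end --- which is precisely a decorated-path/transfer-matrix family of the kind you gesture at, with $\rho^3 = 1 + \rho$ emerging as the growth equation for the limiting path-plus-pendant (cf.\ \Cref{lem:f1}). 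The interlacing step you describe, that a finite $\F$ would leave a forbidden gap $(\max_{H \in \F}\la_1(H), -\la)$, is exactly the argument this paper itself gives when proving \Cref{thm:main-forb} for $\la \ge \las$.

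The ``if'' half is where the genuine gap lies, and you flag it yourself. The entire content of that direction is the range $2 < \la < \las$, and there your proposal offers only ``combine a generalized-line-graph decomposition with a Perron/eigenvector analysis at the exceptional vertices,'' without explaining how the positive gap $\las - \la$ actually produces a uniform vertex bound on a minimal forbidden subgraph of $\G(\la)$. Root-system and Dynkin-diagram methods break cleanly at $-2$ because one leaves the lattice setting; what replaces them in \cite{JP24} is a separate, technical structural argument calibrated to the specific threshold $\las$, not a quantitative perturbation of the Cameron--Goethals--Seidel--Shult theory, and nothing in your sketch reconstructs it. In short, you have a correct outline of the easier direction and an honest identification --- but not a proof --- of the harder one.
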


\begin{proof}[Proof of \cref{thm:main-forb} for $\la < \las$]
    In view of \cref{thm:signed-forb}, we know that the family $\G(\la)$ has a finite forbidden subgraph characterization. By restricting signed graphs to those in $\Gp$, we obtain a finite forbidden subgraph characterization of $\Gp(\la)$ within $\Gp$ from that of $\G(\la)$.
\end{proof}

\begin{proof}[Proof of \cref{thm:main-forb} for $\la \ge \las$ assuming \cref{thm:main}]
    Assume for the sake of contradiction that $\F$ is a finite forbidden subgraph characterization of $\Gp(\la)$ within $\Gp$. Because every signed graph in $\Gp \setminus \Gp(\la)$ contains a member of $\F$ as an induced subgraph, no $p$-colorable signed graph has its smallest eigenvalue in the open interval $(\max\dset{\la_1(F)}{F \in \F}, -\la)$, which contradicts with \cref{thm:main}.
\end{proof}

\section{Binary rowing graphs} \label{sec:rowing}

A large chunk of \cref{thm:main} is essentially established by Shearer \cite{S89}, who proved that the set of spectral radii attained by graphs is dense in $(\la', \infty)$. As was pointed out in \cite{D89}, Shearer actually proved that the set of spectral radii attained by caterpillar trees\footnote{A caterpillar tree is a tree in which all the vertices are within distance $1$ of its central path.} is already dense in $(\la',\infty)$. Since a caterpillar tree is bipartite (or equivalently, $2$-colorable), we rephrase Shearer's result in terms of smallest eigenvalues.

\begin{theorem}[Shearer \cite{S89}; cf. Theorem~3 of Doob \cite{D89}] \label{thm:caterpillar}
    For every $\la \ge \la'$ and every $\eps > 0$, there exists a caterpillar tree $G$ such that $\la_1(G) \in (-\la - \eps, -\la)$. \qed
\end{theorem}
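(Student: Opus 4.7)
The plan is to think of \Cref{thm:caterpillar} as a density-of-spectral-radii statement. Since every caterpillar tree is bipartite, its spectrum is symmetric about $0$, so $\la_1(G) = -\rho(G)$, where $\rho(G)$ is the spectral radius. Thus it is equivalent to show that the set $\dset{\rho(G)}{G \text{ is a caterpillar}}$ is dense in $(\la', \infty)$, and in particular that for every $\la \ge \la'$ and $\eps > 0$ one may realize a caterpillar spectral radius inside $(\la, \la + \eps)$.

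First, I would set up the characteristic polynomial of a caterpillar $C(a_1, \ldots, a_n)$ — with central path $v_1, \ldots, v_n$ and $a_i$ pendant leaves attached to $v_i$ — via the standard tree recursion $P_T(x) = x P_{T-\ell}(x) - P_{T-\ell-u}(x)$ for a leaf $\ell$ attached to $u$. After pulling out one factor of $x$ for each pendant beyond the first at each $v_i$, one obtains a clean three-term linear recursion for auxiliary polynomials $\phi_k$ indexed by the first $k$ vertebrae, of the shape $\phi_k(x) = (x - a_k/x)\phi_{k-1}(x) - \phi_{k-2}(x)$. Equivalently, tracking the rational function $r_k(x) := \phi_k(x)/\phi_{k-1}(x)$ converts the eigenvalue equation into the \emph{continued-fraction iteration} $r_k(x) = (x - a_k/x) - 1/r_{k-1}(x)$, and $\la$ is an eigenvalue of $C(a_1,\ldots,a_n)$ exactly when this orbit reaches $r_n(\la) = 0$.

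Next, for a fixed target $\la > \la'$ I would analyse the one-parameter family of Möbius maps $M_a : r \mapsto (\la - a/\la) - 1/r$ for $a \in \N$. The key computation is that, precisely because $\la > \la' = \varphi^{1/2} + \varphi^{-1/2}$, one of these maps (namely $M_1$) has an attracting fixed point in the positive reals, so that iterating $M_1$ drives $r_k(\la)$ into a small neighbourhood of this fixed point. Once $r_k(\la)$ is trapped near the attractor, a single well-chosen perturbation $a_k \in \sset{1,2,\ldots}$ kicks the orbit so that the next value $r_{k+1}(\la)$ lies in any desired target interval, in particular arbitrarily close to $0$. This realises $\la$ as an arbitrarily close approximation of the largest root of $\phi_{k+1}$, hence of $\rho(C(a_1,\ldots,a_{k+1}))$.

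Finally, to upgrade this to the quantitative density statement $\rho(C) \in (\la, \la + \eps)$, I would combine the approximation above with the strict monotonicity of $\rho$ under vertex addition: adding one leaf, or lengthening the central path by one, strictly increases $\rho$, and for long caterpillars the resulting jump in $\rho$ tends to $0$. Interleaving small perturbations with monotone enlargements therefore packs the achievable values of $\rho$ into every sub-interval of $(\la', \infty)$, giving the density. The main obstacle is the Möbius-dynamics step — the attracting-fixed-point analysis is precisely where the threshold $\la'$ enters, and the argument must be uniform as $\la$ approaches $\la'$ from above; below $\la'$ the attractor disappears and the continued-fraction orbit becomes chaotic, which is why density fails on $(-\la',\infty)$ and why Shearer's bound is sharp.
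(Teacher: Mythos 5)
The paper does not give a proof of this statement; it cites Shearer and Doob and closes it with a \qed. So I will evaluate your outline against Shearer's actual argument. Your setup is correct and is indeed the machinery Shearer uses: for a caterpillar $C(a_1,\dots,a_n)$ the normalized characteristic polynomials do satisfy the three-term recursion $\phi_k = (x - a_k/x)\phi_{k-1} - \phi_{k-2}$, and tracking $r_k = \phi_k/\phi_{k-1}$ turns the eigenvalue condition into the Möbius iteration $r_k = (\la - a_k/\la) - 1/r_{k-1}$. The density argument via ``kick with $M_1$, recover with $M_0$'' is also the right picture.

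However, the key dynamical claim is wrong, and it is precisely the step where the threshold $\la'$ is supposed to enter. You assert that $M_1(r) = (\la - 1/\la) - 1/r$ acquires an attracting fixed point when $\la > \la'$. That is false: a fixed point of $M_1$ solves $r^2 - (\la - 1/\la)r + 1 = 0$, whose discriminant $(\la - 1/\la)^2 - 4$ is nonnegative only when $\la \ge 1+\sqrt 2 \approx 2.414$, which is strictly larger than $\la' = \sqrt{2+\sqrt 5} \approx 2.058$. In the critical range $\la' < \la < 1+\sqrt 2$ the map $M_1$ has no real fixed point at all, so your proposed mechanism does not exist. The correct role of $\la'$ is different: it is $M_0(r) = \la - 1/r$ that has the attracting/repelling fixed-point pair $r_\pm = (\la \pm\sqrt{\la^2-4})/2$ for all $\la > 2$, and the threshold $\la'$ governs whether a single kick by $M_1$ applied at $r_+$ lands above the repelling point $r_-$. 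Indeed $M_1(r_+) = r_+ - 1/\la$, and $r_+ - 1/\la > r_-$ is equivalent to $\sqrt{\la^2-4} > 1/\la$, i.e.\ $\la^4 - 4\la^2 - 1 > 0$, i.e.\ $\la > \sqrt{2+\sqrt 5} = \la'$. So the escape/return structure you want is about the fixed points of $M_0$ and the reach of a single $M_1$-perturbation, not about a fixed point of $M_1$. Once this is corrected, the remaining step (that alternating long $M_0$-recoveries with well-timed $M_1$-kicks lets $r_n(\la)$ be steered into any small positive target window, hence $\rho$ arbitrarily close to $\la$ from below) still needs to be carried out carefully because the kicks are quantized in units of $1/\la$, but that is the genuine content of Shearer's argument rather than a one-line perturbation claim.
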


Jiang and Polyanskii introduced rowing graphs in \cite{JP24} to show that the set of smallest eigenvalues attained by graphs is dense in $(-\infty, -\las)$, filling the gap between $-\la' \approx -2.05817$ and $-\las \approx -2.01980$. We use the Kleene star of $\Sigma$, denoted $\Sigma^*$, to represent the set of all strings of finite length consisting of symbols in $\Sigma$, including the empty string.

\begin{definition}[Rooted graphs and rowing graphs]
    A \emph{rooted graph} is a graph in which one vertex has been distinguished as the root. Given a rooted graph $F$, $n \in \N$, and a string $a \in \N^n$, a \emph{rowing graph} $(F, a)$ is obtained from $F$ by attaching a path $v_0 \dots v_n$ of length $n$ to the root $v_0$ of $F$, and attaching a clique of order $a_i$ to both $v_{i-1}$ and $v_i$ for every $i \in \sset{1,\dots, n}$. As a convention, we regard the rowing graph $(F, a)$ as a rooted graph with root $v_n$.
\end{definition}

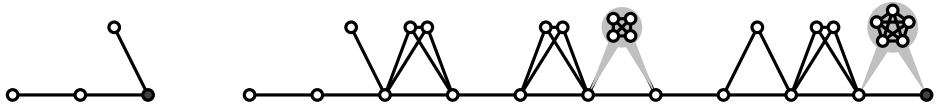
\begin{figure}[t]
    \centering
    \begin{tikzpicture}[very thick, scale=0.45]
        \draw (-11,0) node[vertex]{} -- (-9,0) node[vertex]{} -- (-7,0) node[root-vertex]{} -- (-8,2) node[vertex]{};

        \draw (0,3);
        \draw (-1,2) node[vertex]{} -- (0,0);
        \draw (0.75,2) -- (1.25,2);
        \draw (0,0) -- (0.75,2) node[vertex]{} -- (2,0);
        \draw (0,0) -- (1.25,2) node[vertex]{} -- (2,0);
        
        \draw (4.75,2) -- (5.25,2);
        \draw (4,0) -- (4.75,2) node[vertex]{} -- (6,0);
        \draw (4,0) -- (5.25,2) node[vertex]{} -- (6,0);
        \draw (6,0) -- (7,2) node[vertex]{} -- (8,0);

        \draw[fill=litegray, draw=none] (6,0) -- (7,1.6) -- (8,0) -- (7,2.6) -- cycle;
        \draw[fill=litegray, draw=none] (7, 2) circle (0.6);

        \coordinate (C1) at (7,2);
        \foreach \i/\ang in {1/45, 2/135, 3/225, 4/315}{
            \coordinate (Q\i) at ($(C1)+(\ang:0.36)$);
        }
        \draw (Q1) -- (Q2) -- (Q3) -- (Q4) -- (Q1) -- (Q3);
        \draw (Q2) -- (Q4);
        \foreach \i in {1,2,3,4} {
            \node[vertex] at (Q\i){};
        }

        \draw (10,0) -- (11,2) node[vertex]{} -- (12,0);

        \draw (12.75,2) -- (13.25,2);
        \draw (12,0) -- (12.75,2) node[vertex]{} -- (14,0);
        \draw (12,0) -- (13.25,2) node[vertex]{} -- (14,0);

        \draw[fill=litegray, draw=none] (14,0) -- (15,1.5) -- (16, 0) -- (15,2.7) -- cycle;
        \draw[fill=litegray, draw=none] (15, 2) circle (0.75);

        \coordinate (C2) at (15,2);
        \foreach \i/\ang in {1/90, 2/162, 3/234, 4/306, 5/18}{
            \coordinate (P\i) at ($(C2)+(\ang:0.5)$);
        }
        \draw (P1)--(P2)--(P3)--(P4)--(P5)--cycle;
        \draw (P1)--(P3)--(P5)--(P2)--(P4)--cycle;
        \foreach \i in {1,2,3,4,5} {
            \node[vertex] at (P\i){};
        }

        \draw (-4,0) node[vertex]{} -- (-2,0) node[vertex]{} -- (0,0) node[vertex]{} -- (2,0) node[vertex]{} -- (4,0) node[vertex]{} -- (6,0) node[vertex]{} -- (8,0) node[vertex]{} -- (10,0) node[vertex]{} -- (12,0) node[vertex]{} -- (14,0) node[vertex]{} -- (16,0) node[root-vertex]{};
        \draw (-1,2) node[above]{};
        \draw (-4,0) node[below]{};
        \draw (-2,0) node[below]{};
        \draw (0,0) node[below]{};
        \draw (2,0) node[below]{};
        \draw (4,0) node[below]{};
        \draw (6,0) node[below]{};
        \draw (8,0) node[below]{};
        \draw (10,0) node[below]{};
        \draw (12,0) node[below]{};
        \draw (14,0) node[below]{};
        \draw (16,0) node[below]{};
    \end{tikzpicture}
    \caption{The rooted graph $F$ and a schematic drawing of the rowing graph $(F, 20240125)$.} \label{fig:rowing-graph}
\end{figure}

With this convention, the rowing graph $(F, ab)$ of $F$ can be viewed as the rowing graph $((F, a), b)$ of $(F, a)$, for every $a, b \in \N^*$. See \cref{fig:rowing-graph} for an example of a rooted graph $F$ and a rowing graph of $F$.

To control chromatic numbers, we focus on a specific family of rowing graphs.

\begin{definition}[Binary rowing graphs]
    A \emph{binary rowing graph} of a rooted graph $F$ is a rowing graph $(F, a)$ for which $a \in \sset{0,1}^*$.
\end{definition}

We import the following two ingredients from \cite{JP24}. We point out that although these were proved in \cite{JP24} specifically for the rooted graph $F$ in \cref{fig:rowing-graph}, the proofs over there actually work for arbitrary rooted graphs. Since our arguments differ slightly, we include the adapted proofs in \cref{sec:app}.

\begin{lemma}[Lemma 2.22(c) of Jiang and Polyanskii~\cite{JP24}] \label{lem:l}
    For every $\eps > 0$ there exists $\ell \in \N^+$ such that for every rooted graph $F$ and $a, b \in \N^*$,
    \[
        \la_1(F, a0^\ell) < \la_1(F, a0^\ell b) + \eps.
    \]
\end{lemma}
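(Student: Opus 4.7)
The plan is to construct a test vector on $(F, a 0^\ell)$ out of the smallest eigenvector of $(F, a 0^\ell b)$, using the long $0^\ell$ buffer to separate the two halves. Write $\mu_1 := \la_1(F, a 0^\ell b)$ and $\mu_2 := \la_1(F, a 0^\ell)$. Since $(F, a 0^\ell)$ is an induced subgraph of $(F, a 0^\ell b)$, Cauchy interlacing already gives $\mu_1 \le \mu_2$, so the lemma amounts to the matching upper bound $\mu_2 < \mu_1 + \eps$. By the variational characterization of the smallest eigenvalue, it suffices to exhibit a nonzero vector $w$ on $V(F, a 0^\ell)$ whose Rayleigh quotient is less than $\mu_1 + \eps$.

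Fix a unit $\mu_1$-eigenvector $v$ of $(F, a 0^\ell b)$ and set $y_i := v(v_{|a|+i})$ for $i = 0, \dots, \ell$. Because the middle $0^\ell$ segment consists of plain path edges with no attached cliques, the eigenvalue equation $A v = \mu_1 v$ reduces at each interior vertex of the segment to the two-term recurrence $y_{i-1} + y_{i+1} = \mu_1 y_i$, whose characteristic polynomial is $x^2 - \mu_1 x + 1$. Since $\sum_{i=0}^{\ell} y_i^2 \le \|v\|^2 = 1$, a simple averaging argument produces an index $i^* \in \{1, \dots, \ell\}$ with $y_{i^*-1}^2 + y_{i^*}^2 \le 2/\ell$.

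I would then take the test vector $w$ on $V(F, a 0^\ell)$ defined by $w = v$ on $V(F, a 0^{i^*-1})$ and $w = 0$ elsewhere. The induced subgraph of $(F, a 0^\ell b)$ on the support of $w$ is exactly $(F, a 0^{i^*-1})$, and the single edge of $(F, a 0^\ell b)$ joining this support to its complement is the path edge $v_{|a|+i^*-1} v_{|a|+i^*}$ (the digit at position $|a|+i^*$ is $0$, so no clique is attached there). Multiplying the identity $Av = \mu_1 v$ by $v$ vertex-by-vertex on the support collapses to the clean relation
\[
    w^{\top} A_{(F, a 0^\ell)} w \;=\; \mu_1 \|w\|^2 - y_{i^*-1} y_{i^*},
\]
so the Rayleigh quotient of $w$ equals $\mu_1 - y_{i^*-1} y_{i^*}/\|w\|^2$. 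Combining with the pigeonhole bound yields
\[
    \mu_2 - \mu_1 \;\le\; \frac{|y_{i^*-1} y_{i^*}|}{\|w\|^2} \;\le\; \frac{1}{\ell \, \|w\|^2}.
\]

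The main obstacle will be to bound the denominator $\|w\|^2 = \|v|_{V(F, a 0^{i^*-1})}\|^2$ uniformly from below; if $v$ is concentrated on the $b$-side of the buffer then this norm can be small for every admissible $i^*$, and the bound above is not yet strong enough to absorb $\eps$. To resolve this I plan to couple the pigeonhole selection with the transfer-matrix dynamics of the recurrence. When $|\mu_1| > 2$ the recurrence is hyperbolic and drives $|y_i|$ to be exponentially small in $\min(i, \ell - i)$, while when $|\mu_1| \le 2$ the conserved quadratic form $y^2 - \mu_1 y y' + y'^2$ forces the uniform bound $|y_i| = O(\ell^{-1/2})$. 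In either regime, one can refine the choice of $i^*$ so that it lies in a range carrying a fixed fraction of $\|v\|^2$, which bounds $\|w\|^2$ away from $0$ and lets $\ell = \ell(\eps)$ close the gap; in the extreme case where even the middle of the buffer carries negligible mass, $v$ lives essentially on the $b$-portion, and the problem can be reduced to a smaller instance in which the rooted graph $F$ is replaced by a single-vertex rooted graph.
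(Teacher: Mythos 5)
The paper does not prove this lemma; it cites Lemma~2.22(c) of Jiang--Polyanskii~\cite{JP24} (with a remark that the proof there, given for the specific root $F_1$, extends verbatim to arbitrary rooted graphs). So there is no in-paper proof to compare against, and I evaluate your proposal on its own. Your core computation is correct: cutting the unique path edge $v_{|a|+i^*-1}v_{|a|+i^*}$ and restricting the eigenvector $v$ to $V(F,a0^{i^*-1})$ gives exactly $w^{\top}A\,w = \mu_1\|w\|^2 - y_{i^*-1}y_{i^*}$, and pigeonhole supplies some $i^*$ with $|y_{i^*-1}y_{i^*}| \le 1/\ell$. You also correctly locate the real obstacle: when $v$ concentrates on the $b$-side, $\|w\|^2$ can be arbitrarily small for every admissible $i^*$, and the Rayleigh bound then says nothing useful about $\la_1(F,a0^\ell)$.

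Your proposed resolution, however, does not close that gap. In the hyperbolic regime ($\mu_1 < -2$, write $\theta + \theta^{-1} = -\mu_1$ with $\theta>1$) the transfer-matrix analysis does make $|y_j|$ exponentially small in $\min(j,\ell-j)$, but it simultaneously drives every candidate $\|w\|^2$ to be exponentially small as well; when the solution is dominated by the growing mode, the ratio $|y_{i^*-1}y_{i^*}|/\|w\|^2$ stabilizes near $\theta-\theta^{-1}$, a constant independent of $\ell$, so no tuning of $i^*$ inside the buffer helps. And the final suggestion --- ``reduce to a smaller instance where $F$ is replaced by a single-vertex rooted graph'' --- silently swaps the target quantity: restricting $v$ to the $b$-side produces a test vector for $\la_1(\bullet,0^{\ell-i^*}b)$, not for $\la_1(F,a0^\ell)$, and you never reconnect the two. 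The missing ingredient is structural rather than dynamical: (i) every rowing graph $(\bullet,c)$, $c\in\N^*$, is a line graph (its edges partition into the cliques $\{v_{i-1},v_i\}\cup C_i$, each vertex lying in at most two of them), so $\la_1(\bullet,0^{\ell-i^*}b)\ge -2$; and (ii) $(F,a0^\ell)$ contains the induced path $v_{|a|}\cdots v_{|a|+\ell}$, so $\la_1(F,a0^\ell) \le -2\cos\tfrac{\pi}{\ell+2} < -2 + O(\ell^{-2})$. If $\|w\|^2 \ge 1/2$ your argument already gives $\la_1(F,a0^\ell)-\mu_1 = O(1/\ell)$; if $\|w\|^2 < 1/2$, then $\|v-w\|^2 \ge 1/2$, and (i) together with your identity applied on the $b$-side forces $\mu_1 \ge -2 - 2/\ell$, which combined with (ii) again yields $\la_1(F,a0^\ell)-\mu_1 = O(1/\ell)$. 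Without the line-graph and long-path facts the concentrated-on-$b$ case is unhandled, so as written the proposal has a genuine gap.
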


\begin{lemma}[Lemma 2.22(e) of Jiang and Polyanskii~\cite{JP24}] \label{lem:m}
    For every $\eps > 0$ and every rooted graph $F$, there exists $m \in \N^+$ such that for every $n \ge m$ and every $a \in \N^n$, there exists $k \in \sset{1,\dots, m}$ such that
    \[
        \la_1(F, a_1 \dots a_{k-1} 0 a_k \dots a_n) < \la_1(F, a) + \eps.
    \]
\end{lemma}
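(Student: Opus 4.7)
My plan is to prove \Cref{lem:m} by combining a transfer-matrix analysis of rowing graphs with a pigeonhole argument over the $m$ candidate insertion positions. The starting observation is that the smallest-eigenvalue equation for $(F, a)$ reduces, via Schur complementation at each appended clique-plus-vertex block, to a rational recursion in a single scalar state attached to the current root: each block acts as a M\"obius-type transfer map $T_{a, \la}$ depending on the symbol $a \in \N$ and on $\la$, and inserting a $0$ amounts to inserting one extra application of $T_{0, \la}$ into the composition. On any compact sub-interval of $(-\infty, -\las)$, I expect each $T_{a, \la}$ to be a strict hyperbolic contraction with ratio bounded away from $1$, uniformly in $a$, so that long compositions contract rapidly to a unique attractor.

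Concretely, the steps I would carry out are as follows. First, for each rooted graph $H$ with root $v$, define the Weyl state $\phi_H(\la) := e_v^{\top}(\la I - A_H)^{-1} e_v$ and derive the explicit transfer rule $\phi_{(H, a)}(\la) = T_{a, \la}\bigl(\phi_H(\la)\bigr)$ by successive Schur complements at the clique of order $a$ and at the new path vertex. Second, verify the uniform-contraction claim above on the relevant $\la$-range. Third, fix $\la_{\star} := \la_1(F, a)$, form the trajectory $\phi_k := \phi_{(F, a_1 \dots a_k)}(\la_{\star})$ for $k = 0, 1, \dots, m$, and argue by pigeonhole that some $k^{*} \le m$ places $\phi_{k^{*} - 1}$ within $O(c^{m})$ of the attracting fixed point of $T_{0, \la_{\star}}$. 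Fourth, insert a $0$ at that position: the state then shifts by $O(c^{m})$, and by the monotonicity of $\phi_{(F, b)}(\la)$ in $\la$ near $\la_{\star}$, the smallest eigenvalue of $(F, a_1 \dots a_{k^{*} - 1} 0 a_{k^{*}} \dots a_m)$ exceeds $\la_{\star}$ by at most $\eps$, provided $m$ is taken large enough in terms of $\eps$ and the contraction ratio.

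The step I expect to be the main obstacle is the third one: uniform contraction alone only ensures that consecutive states $\phi_{k-1}, \phi_k$ become close, not that any $\phi_k$ is close to the specific fixed point of $T_{0, \la_{\star}}$. I would handle this either by a refined pigeonhole that exploits a common attracting domain shared by all $T_{a, \la_{\star}}$, so that the trajectory itself funnels toward a small neighborhood of the attractor, or by a case split: if the first $m$ symbols of $a$ already contain a long block of zeros, the conclusion follows directly from \Cref{lem:l}; otherwise, the absence of such a block can be leveraged to force $\phi_{k^{*} - 1}$ close to the fixed point. Translating state-space proximity back into eigenvalue proximity reduces to a quantitative implicit-function statement, which should be tractable given the monotonicity of the Weyl function in $\la$.
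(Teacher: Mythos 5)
First, a point of comparison: the paper does not prove this lemma at all --- it is imported verbatim from \cite{JP24} (Lemma 2.22(e)), with only a remark that the proof there, written for the specific rooted graph $F_1$, goes through for arbitrary rooted graphs. So there is no in-paper argument to match your proposal against, and I am judging the plan on its own merits.

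There are two genuine gaps. The first is that the analytic premise of your step 2 is false. The transfer map for the symbol $0$ is the M\"obius map $t \mapsto 1/(x-t)$ (with $x = -\la$), which for $x > 2$ has two real fixed points $y_\pm = x/2 \pm \sqrt{x^2/4-1}$; it contracts only near the attracting fixed point $y_-$ and is strictly expanding near the repelling one $y_+$. The expanding regime cannot be assumed away: the state sits there exactly when the graph is on the verge of acquiring an eigenvalue below $-x$, which is the situation the lemma is about. Uniformity of any contraction estimate over all symbols $a \in \N$ (unbounded clique sizes) is a further unaddressed issue.

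The second gap is more structural: steps 3--4 prove the wrong inequality. If you succeed in placing $\phi_{k^*-1}$ near the fixed point of $T_{0,\la_\star}$, the insertion is nearly neutral and you conclude at best that $\la_1(F, a_1\dots a_{k^*-1}0a_{k^*}\dots a_m)$ is within $\eps$ of $\la_1(F, a_1\dots a_m)$. But the lemma compares the modified \emph{$m$-truncation} against the \emph{full} graph $(F,a)$ with $n \ge m$, and $\la_1(F, a_1\dots a_m) - \la_1(F,a)$ need not be small: for fixed $m$ the discarded tail $a_{m+1}\dots a_n$ can lower the smallest eigenvalue by a constant amount. A ``harmless'' insertion therefore cannot bridge the truncation-versus-full-graph gap; the chosen insertion must make the length-$(m+1)$ prefix strictly \emph{worse}, by enough to compensate for throwing away the tail, and that happens precisely when the state is at or beyond the repelling fixed point --- the regime your contraction hypothesis excludes. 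Your fallback case split is half right: if $a_1\dots a_m$ contains a block $0^\ell$, then \Cref{lem:l} together with interlacing does give the conclusion. But the complementary claim that the absence of such a block forces some $\phi_{k-1}$ close to the fixed point of $T_{0,\la_\star}$ is unsupported and false in general (for $a = j^m$ with $j \ge 1$ the states approach the fixed point of $T_{j,\la}$, not of $T_{0,\la}$), and even if it held it would again only yield the truncation comparison. A smaller technical point: $\la_\star = \la_1(F,a)$ may be an eigenvalue of an intermediate prefix $(F, a_1\dots a_k)$, so the resolvent defining $\phi_k$ can be undefined at $\la_\star$; one has to work at a slightly perturbed parameter throughout.
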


We adopt the following handy notation for every rooted graph $F$ and every $a \in \N^*$,
\[
    \la_1(F, a0^\infty) := \lim_{n \to \infty}\la_1(F, a0^n).
\]
Note that the limit exists because $\la_1(F, a0^n)$ decreases as $n$ increases, and the maximum degree of $(F, a0^n)$ is at most that of $(F, a0^2)$.

We need three more ingredients, the first two of which can respectively be seen as variations of \cite[Lemma 2.22(d)]{JP24} and \cite[Lemma 2.22(b)]{JP24} adapted for binary rowing graphs.

\begin{lemma} \label{lem:one-to-three}
    For every rooted graph $F$ and every $a \in \sset{0,1}^*$, if $-\la' < \la_1(F, a01^30^\infty) < -2$, then
    \[
        \la_1(F, a01^3 0^\infty) \le \la_1(F, a10^\infty).
    \]
\end{lemma}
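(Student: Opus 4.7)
The plan is to isolate the tail at the attachment vertex using a Schur complement, reduce the eigenvalue comparison to a purely analytic inequality in one variable, and then verify that inequality by explicit computation. Let $\mu := -\la_1(F, a01^30^\infty) \in (2, \la')$, and view each rowing graph as $(F, a)$ glued at its root $u$ to a semi-infinite tail --- call them $T_1 = 01^30^\infty$ for $G_1 := (F, a01^30^\infty)$ and $T_2 = 10^\infty$ for $G_2 := (F, a10^\infty)$. Each $T_i \setminus u$ has smallest eigenvalue $-2$ (the path contributes $-2$ and one checks by the decaying-eigenvector ansatz that no bound state lies below), so for $\mu > 2$ the Schur complement $\sigma(H, \mu)$ of $A_H + \mu I$ at $u$ is well-defined for $H \in \sset{G_i, (F, a), T_i}$ and decomposes as $\sigma(G_i, \mu) = \sigma((F,a), \mu) + \sigma(T_i, \mu) - \mu$. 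By Haynsworth inertia additivity (applied to finite truncations and passed to the limit), $\la_1(G_i) \ge -\mu$ if and only if $\sigma(G_i, \mu) \ge 0$. The hypothesis $\la_1(G_1) = -\mu$ gives $\sigma((F,a), \mu) = \mu - \sigma(T_1, \mu)$, so the desired $\la_1(G_2) \ge -\mu$ is equivalent to $\sigma(T_2, \mu) \ge \sigma(T_1, \mu)$, an inequality depending only on $\mu$, not on $F$ or $a$.

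Next I would compute both tail Schur complements via the decaying-eigenvector ansatz. On any plain-path segment the unique decaying ratio is $r := (-\mu + \sqrt{\mu^2 - 4})/2 \in (-1, 0)$, the root of $x^2 + \mu x + 1 = 0$ of smaller absolute value. Eliminating each triangle vertex $t_i$ by the substitution $\phi(t_i) = -(\phi(w_{i-1}) + \phi(w_i))/\mu$ reduces each tail to a tridiagonal recursion on the path $w_0 = u, w_1, w_2, \ldots$. A short propagation yields $\sigma(T_2, \mu) = \mu - g_2(\mu)$ with $g_2(\mu) = (2 - \mu) - r(\mu - 1)^2$, and $\sigma(T_1, \mu) = \mu - g_1(\mu)$ with $g_1(\mu) = 1/(\mu + \tau_3)$, where $\tau_k$ obeys $\tau_0 = r$ and the Mobius recursion
\[
    \tau_k = -\frac{2(\mu-1) + \tau_{k-1}}{\mu^2 - 1 + \mu \tau_{k-1}}.
\]
Since $g_2(\mu) = -\tau_1$, the reduction becomes the analytic inequality $1 + \tau_1(\mu + \tau_3) \ge 0$ for all $\mu \in (2, \la')$.

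To prove this inequality, I would first note that the Mobius map $t \mapsto -(2(\mu-1) + t)/(\mu^2 - 1 + \mu t)$ has derivative $(\mu-1)^2/(\mu^2 - 1 + \mu t)^2 > 0$, so $\tau_k$ is strictly monotone in $k$, and since $\tau_1 - \tau_0 = r^2(2 - \mu) < 0$ it is decreasing. At $\mu = 2$ we have $r = -1$ and every $\tau_k = -1$, so the inequality holds with equality; for $\mu > 2$, writing $\tau_k$ as a rational function of $\mu$ and $r$ via the Mobius recursion and then using $r^2 + \mu r + 1 = 0$ to eliminate higher powers of $r$ converts $1 + \tau_1(\mu + \tau_3)$ into an explicit rational function of $\mu$, whose non-negativity on $(2, \la')$ can be verified.

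The main obstacle is the sharpness of this final inequality. For $\mu > 2$ the value $\tau_1$ lies strictly inside the root interval $(1/r, r)$ of $x^2 + \mu x + 1$, so $\tau_1^2 + \mu \tau_1 + 1 < 0$; the naive bound replacing $\tau_3$ by $\tau_1$ therefore fails, and the whole positive gap comes from the telescoping difference $\tau_1 - \tau_3 > 0$. Moreover, numerical inspection shows $g_1(\mu) \ge g_2(\mu)$ fails already for $\mu$ just above $\la'$, confirming that the hypothesis $\la_1 > -\la'$ is essentially tight for the tail inequality. A clean proof therefore requires either an explicit lower bound on $\tau_1 - \tau_3$ that exactly offsets the deficit, or a factorization producing a polynomial whose positivity on $(2, \la')$ follows from the defining relation $\varphi^2 = \varphi + 1$ underlying $\la' = \varphi^{1/2} + \varphi^{-1/2}$.
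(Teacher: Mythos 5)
Your reduction is in the same spirit as the paper's: Schur-complement out the tail, compare effective potentials at the attachment vertex, and reduce to a one-variable inequality on $(2,\la')$. There are, however, two problems.

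First, a structural one. You take the Schur complement of $A_{G_i}+\mu I$ at $u$ against \emph{everything else}, and split it as $\sigma(G_i,\mu)=\sigma((F,a),\mu)+\sigma(T_i,\mu)-\mu$. This requires $A_{(F,a)\setminus u}+\mu I\succ 0$, i.e.\ $\la_1((F,a)\setminus u)>-\mu$, which is not part of the hypotheses; interlacing only gives $\ge -\mu$, and the boundary case breaks the equivalence ``$\la_1(G_i)\ge -\mu$ iff $\sigma(G_i,\mu)\ge 0$.'' The paper avoids this entirely by Schur-complementing \emph{only} the tail block (which is a line graph, hence $\succ 0$ for $x>2$), leaving the matrix $A_{(F,a)}+xI-\alpha E_{v,v}$ on $V(F,a)$ intact. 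Then the comparison $\alpha\ge\beta$ plus $E_{v,v}\succeq 0$ immediately gives $A_{(F,a)}+xI-\beta E_{v,v}\succeq A_{(F,a)}+xI-\alpha E_{v,v}\succeq 0$, with no assumption on $(F,a)\setminus u$ needed. Your argument can be repaired by switching to this one-sided Schur complement, at which point it coincides with the paper's \Cref{lem:path-ext-plus}.

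Second, and more seriously, the proof is not finished. You reduce to the inequality $1+\tau_1(\mu+\tau_3)\ge 0$ on $(2,\la')$, but then explicitly flag it as ``the main obstacle,'' explain why the naive bound fails, observe numerically that it is tight near $\la'$, and list two possible strategies without carrying either out. This is precisely the content of the lemma, and it is where the paper does actual work: after substituting $x=y+1/y$, it exhibits $\alpha-\beta$ as an explicit rational function in $y$ with polynomial factors
\[
    y(y-1)^2,\quad y^{12}-2y^{11}+\cdots-1,\quad (y^2-y+1),\quad -y^{10}+y^9+\cdots+1,
\]
and certifies each sign on $(7/9,1)\supset(\varphi^{-1/2},1)$ by Sturm's theorem. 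Without an analogous explicit verification, your argument is a correct reduction but not a proof.
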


\begin{proposition} \label{lem:cover}
    For the rooted graphs $F_1, F_2, F_3$ in \cref{fig:rooted-graphs}, the intervals
    \[
        \left(\la_1(F_i, 1^8), \la_1(F_i, 0^\infty)\right) \text{ for }i \in \sset{1,2,3}
    \] cover $(-\la', -\las)$.
\end{proposition}

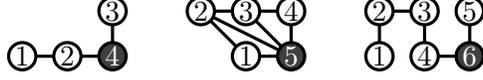
\begin{figure}
    \centering
    \begin{tikzpicture}[very thick, scale=0.3, baseline=(v.base)]
        \coordinate (v) at (0,0);
        \draw (3,0) -- +(0,2) node[vertex]{3};
        \draw (-1,0) node[vertex]{1} -- (1,0) node[vertex]{2} -- (3,0) node[root-vertex]{4};
    \end{tikzpicture}\qquad
    \begin{tikzpicture}[very thick, scale=0.3, baseline=(v.base)]
        \coordinate (v) at (0,0);
        \draw (3,0) -- +(-2,2) -- +(-4,2);
        \draw (3,0) -- (-1,2) node[vertex]{2} -- (1,0) node[vertex]{1} -- (3,0) node[root-vertex]{5} --  (3,2) node[vertex]{4} -- (1,2) node[vertex]{3};
    \end{tikzpicture}\qquad
    \begin{tikzpicture}[very thick, scale=0.3, baseline=(v.base)]
        \coordinate (v) at (0,0);
        \draw (7,0) -- +(0,2) node[vertex]{5};
        \draw (3,0) node[vertex]{1} -- (3,2) node[vertex]{2} -- (5,2) node[vertex]{3} -- (5,0) node[vertex]{4} -- (7,0) node[root-vertex]{6};
    \end{tikzpicture}
    \caption{Rooted graphs $F_1, F_2, F_3$.} \label{fig:rooted-graphs}
\end{figure}

\begin{proposition} \label{lem:neg-2}
    For the rooted graphs $F_1, F_2, F_3$ in \cref{fig:rooted-graphs},
    \[
        \la_1(F_i,0^\infty) < -2 \text{ for }i \in \sset{1,2,3}.
    \]
\end{proposition}

We shall come back to the technical proofs of \cref{lem:one-to-three,lem:cover,lem:neg-2} in the next section.

\begin{proof}[Proof of \cref{thm:main} assuming \cref{lem:one-to-three,lem:cover,lem:neg-2}]
    In view of \cref{thm:caterpillar}, we may assume that $\la \in (\las, \la')$. According to \cref{lem:cover}, there exists a rooted graph $F$ in \cref{fig:rooted-graphs} such that $\la \in (-\la_1(F, 0^\infty), -\la_1(F, 1^8))$. Clearly, every binary rowing graph of $F$ is $3$-colorable.
    
    Fix $\eps > 0$. We assume for the sake of contradiction that no binary rowing graph of $F$ has its smallest eigenvalue in $(-\la - \eps, -\la)$. Let $\ell \in \N^+$ and $m \in \N^+$ be given by \cref{lem:l,lem:m}. Without loss of generality, we may assume that $m + \ell \ge 8$ and $\ell \ge 3$. Define
    \[
        S = \dset{a_1 \dots a_m b_1 \dots b_\ell \in \sset{0,1}^{m+\ell}}{\la_1(F, a_1 \dots a_m b_1 \dots b_\ell c) < -\la \text{ for some }c \in \sset{0,1}^*}.
    \]
    
    Since $\la_1(F, 1^8) < -\la$, we must have $1^{m+\ell} \in S$, and so $S$ is non-empty. Let $ab$ be the minimum element of $S$ under the lexicographical order, where $a \in \sset{0,1}^m$ and $b \in \sset{0,1}^{\ell}$. Let $c \in \sset{0,1}^*$ be the witness of $ab \in S$, that is, $\la_1(F, abc) < -\la$. Since no binary rowing graph of $F$ has its smallest eigenvalue in $(-\la-\eps, -\la)$, we shall repeatedly use the fact that $\la_1(F, abc) + \eps < -\la$.
    
    We claim that $b = 0^\ell$. Indeed, from \cref{lem:m} we obtain $k \in \sset{1,\dots, m}$ such that
    \[
        \la_1(F, a_1 \dots a_{k-1} 0a_k \dots a_m b c) < \la_1(F, a b c) + \eps < -\la,
    \]
    and so $a_1\dots a_{k-1}0a_k \dots a_mb_1\dots b_{\ell-1} \in S$. By the minimality of $ab$ in $S$, we must have $a_k = \dots = a_m = b_1 = \dots = b_\ell = 0$.

    In view of the claim, we get from \cref{lem:l} that
    \begin{equation} \label{eqn:fab}
        \la_1(F, ab) \le \la_1(F, abc) + \eps < -\la.
    \end{equation}
    Since $\la_1(F, 0^{m+\ell}) > \la_1(F, 0^\infty) > -\la$, we must have $a \neq 0^m$. Let $k \in \sset{1,\dots,m}$ be the last index of $a$ for which $a_k = 1$. Set $\tilde{a} = a_1 \dots a_{k-1}$, and set $n := m+\ell-k$. Recall that $\ell \ge 3$, and so $n \ge 3$. By the minimality of $ab$, which is equal to $\tilde{a}10^n$, in $S$, we know that $\tilde{a}01^30^{n-3} \in \sset{0,1}^{m+\ell} \setminus S$, and, in particular, $\la_1(F, \tilde{a}01^30^\infty) \ge -\la$. Note that
    \[
        -\la' < -\la \le \la_1(F, \tilde{a}01^30^\infty) < \la_1(F, 0^\infty) < -2,
    \]
    where the last inequality comes from \cref{lem:cover-concrete}. We apply \cref{lem:one-to-three} to obtain that
    \[
        \la_1(F, \tilde{a}01^30^\infty) \le \la_1(F, \tilde{a}10^\infty) \le \la_1(F, ab),
    \]
    which contradicts with \eqref{eqn:fab}.
\end{proof}

\section{Proofs of \cref{lem:one-to-three,lem:cover,lem:neg-2}} \label{sec:technical}

The following linear algebraic lemma characterizes $\la_1(F, a0^\infty)$. Denote by $E_{v,v}$ the unit matrix where the $(v,v)$-entry with value $1$ is the only nonzero entry. We write $A \succeq 0$ when $A$ is positive-semidefinite and $A \succ 0$ when $A$ is positive-definite.


\begin{lemma}[Lemma 26 of Acharya and Jiang \cite{AJ25}] \label{lem:path-ext}
    For every rooted graph $G$, and every $x \ge 2$, \[
        \la_1(G, 0^\infty) \ge -x \text{ if and only if }
        A_G + xI - yE_{v,v} \succeq 0,
    \]
    where $v$ is the root of $G$, and $y = x / 2 - \sqrt{x^2/4-1}$. \qed
\end{lemma}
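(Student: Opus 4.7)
The plan is to peel off the infinite path using a block Schur complement, so that the infinite condition $\la_1(G, 0^\infty) \ge -x$ reduces to a finite positive-semidefinite condition on $A_G$. Writing the adjacency matrix of $(G, 0^n)$ in block form as
\[
    A_{(G, 0^n)} + xI = \begin{pmatrix} A_G + xI & e_v e_1^\top \\ e_1 e_v^\top & P_n + xI \end{pmatrix},
\]
where $P_n$ is the adjacency matrix of the path on the $n$ attached vertices and $e_1$ is the indicator of the path vertex adjacent to $v$, the condition $x \ge 2$ guarantees $P_n + xI \succ 0$. The Schur complement then yields the equivalence
\[
    A_{(G, 0^n)} + xI \succeq 0 \iff A_G + xI - y_n E_{v,v} \succeq 0,
\]
with $y_n := \bigl[(P_n + xI)^{-1}\bigr]_{1,1}$, since $(e_v e_1^\top)(P_n + xI)^{-1}(e_1 e_v^\top) = y_n E_{v,v}$.

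Next I would identify $y_n$ via a continued fraction. Applying another Schur complement to $P_n + xI$, this time separating the first vertex of the attached path from the remaining copy of $P_{n-1}$, gives the recursion
\[
    y_n = \frac{1}{x - y_{n-1}}, \qquad y_1 = \frac{1}{x}.
\]
The map $t \mapsto 1/(x-t)$ has fixed points determined by $y^2 - xy + 1 = 0$, namely $y_\pm = x/2 \pm \sqrt{x^2/4 - 1}$. A short monotonicity argument---$t \mapsto 1/(x-t)$ is increasing on $(-\infty, x)$, $y_1 < y_-$, and $f$ maps $[0, y_-)$ into itself---shows by induction that $y_n$ increases monotonically to the attractive root $y = y_- = x/2 - \sqrt{x^2/4 - 1}$.

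To conclude, I would use that $\la_1(G, 0^n)$ is a non-increasing sequence converging to $\la_1(G, 0^\infty)$, so $\la_1(G, 0^\infty) \ge -x$ holds if and only if $A_{(G, 0^n)} + xI \succeq 0$ for every $n$, equivalently $A_G + xI \succeq y_n E_{v,v}$ for every $n$. Taking $n \to \infty$ and using $y_n \nearrow y$, together with the closedness of the positive-semidefinite cone, turns this into the single condition $A_G + xI - y E_{v,v} \succeq 0$. The main obstacle is the boundary case $x = 2$: there the two fixed points collapse at $y = 1$ and convergence of the recursion degrades to sub-geometric ($y_n = n/(n+1)$), but monotone increase to $1$ remains clear, and the psd-cone closure step in the passage to the limit goes through unchanged.
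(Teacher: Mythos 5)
Your proof is correct, and it follows the standard route that the paper itself (and the cited source) relies on: the condition $\la_1(G,0^n)\ge -x$ is equivalent to $A_{(G,0^n)}+xI\succeq 0$, a Schur complement against the positive-definite path block reduces this to $A_G+xI-y_nE_{v,v}\succeq 0$ with $y_n=\det(P_{n-1}+xI)/\det(P_n+xI)$ satisfying $y_n=1/(x-y_{n-1})$, and the monotone convergence $y_n\nearrow y=x/2-\sqrt{x^2/4-1}$ together with closedness of the positive-semidefinite cone gives the limit statement, including at $x=2$. This is exactly the Schur-complement mechanism the paper reuses in its Lemma \ref{lem:path-ext-plus}, so no further comparison is needed.
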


Notice that each diagonal entry of $A_G + xI - yE_{v,v}$ increases as $x$ increases. We have the following version of \cref{lem:path-ext} with strict inequalities.

\begin{corollary} \label{lem:path-ext-strict}
    For every rooted graph $G$, and every $x > 2$, \[
        \la_1(G, 0^\infty) > -x \text{ if and only if }
        A_G + xI - yE_{v,v} \succ 0,
    \]
    where $v$ is the root of $G$, and $y = (x / 2 - \sqrt{x^2/4-1})$. \qed
\end{corollary}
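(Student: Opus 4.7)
The plan is to deduce \Cref{lem:path-ext-strict} directly from \Cref{lem:path-ext} by exploiting the monotonicity of the matrix $M(x) := A_G + xI - yE_{v,v}$ in the parameter $x$. The key observation is that on $(2, \infty)$ the function $y(x) = x/2 - \sqrt{x^2/4 - 1}$ is strictly decreasing; this can be verified by a one-line derivative computation, or more conceptually via the identity $y(x-y) = 1$ together with $y < 1 < x-y$. Consequently, for any $2 < x' < x$ and $y' := y(x')$, the difference
\[
    M(x) - M(x') = (x - x')I + (y' - y)E_{v,v}
\]
is a diagonal matrix with every entry strictly positive, hence positive definite. In particular, $M$ is strictly increasing in the Loewner order on $(2, \infty)$.

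Given this, both directions become routine. For the ``only if'' direction, I would assume $\la_1(G, 0^\infty) > -x$, pick $x' \in (2, x)$ with $x' \ge -\la_1(G, 0^\infty)$ (which exists, since $-\la_1(G, 0^\infty) < x$ and $\la_1(G, 0^\infty) \le -2$), apply \Cref{lem:path-ext} at $x'$ to obtain $M(x') \succeq 0$, and conclude $M(x) = M(x') + (M(x) - M(x')) \succ 0$. For the ``if'' direction, I would assume $M(x) \succ 0$ and exploit the continuity of $M$ in $x$ together with the openness of the positive-definite cone to pick some $x' \in (2, x)$ still satisfying $M(x') \succ 0$; in particular $M(x') \succeq 0$, so \Cref{lem:path-ext} yields $\la_1(G, 0^\infty) \ge -x' > -x$.

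I do not foresee any real obstacle; the entire argument is a standard template for upgrading a semidefinite characterization to its strict counterpart whenever the underlying parameter enters monotonically in both the eigenvalue bound and the matrix, and the only piece of bookkeeping is the brief monotonicity check on $y(x)$.
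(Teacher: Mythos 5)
Your proposal is correct and takes essentially the same approach as the paper: the paper's only remark before the corollary (``each diagonal entry of $A_G + xI - yE_{v,v}$ increases as $x$ increases'') is precisely the Loewner-monotonicity observation you make explicit via the decomposition $M(x) - M(x') = (x-x')I + (y'-y)E_{v,v} \succ 0$. The continuity-and-openness argument for the ``if'' direction and the choice of a smaller $x'$ for the ``only if'' direction are exactly the routine bookkeeping the paper leaves implicit, so you have simply filled in the details the authors omitted.
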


We are in the position to characterize $\la_1(F, a01^30^\infty)$ and $\la_1(F, a10^\infty)$ in \cref{lem:one-to-three}.

\begin{lemma} \label{lem:path-ext-plus}
    For every rooted graph $G$, and every $x > 2$,
    \begin{gather*}
        \la_1(G, 01^30^\infty) \ge -x \text{ if and only if }A_G + xI - \alpha E_{v,v}\succeq 0, \\
        \la_1(G, 10^\infty) \ge -x \text{ if and only if }A_G + xI - \beta E_{v,v}\succeq 0,
    \end{gather*}
    where $v$ is the root of $G$,
    \begin{equation}\begin{gathered} \label{eqn:aby}
        \alpha = \frac{(x^5 - 5x^3 + 2x^2 + 3x) y - (x^6 - 7x^4 + 4x^3 + 9x^2 - 6x - 1)}{(x^6 - 7x^4 + 4x^3 + 9x^2 - 6x - 1) y - (x^7 - 9x^5 + 6x^4 + 19x^3 - 20x^2- 3x + 6)}, \\
        \beta = \frac{y - (2x-2)}{x y - (x^2-1)},
        \text{ and }y = x / 2 - \sqrt{x^2/4-1}.
    \end{gathered}\end{equation}
\end{lemma}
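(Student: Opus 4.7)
The plan is to derive both equivalences from \Cref{lem:path-ext} by first attaching the prescribed prefix (either $01^3$ or $1$) to $G$ before the infinite tail of $0$'s, and then Schur-complementing out the decoration vertices so that only the $(v_0,v_0)$-entry of $A_G + xI$ is modified.

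For the $\beta$-case I would set $G' := (G,1)$ with root $v_1$. Since the rowing-graph composition identity gives $(G',0^\infty) = (G,10^\infty)$, \Cref{lem:path-ext} shows that $\la_1(G,10^\infty) \ge -x$ is equivalent to $A_{G'} + xI - y E_{v_1,v_1} \succeq 0$. Now $V(G') \setminus V(G) = \sset{v_1, u_1}$, and $v_0$ is the only vertex of $V(G)$ adjacent to either of them. The $2 \times 2$ principal submatrix on $\sset{v_1,u_1}$, with diagonal $(x-y,x)$ and off-diagonals $1$, is positive definite for $x > 2$, since the identity $y(x-y) = 1$ forces $x - y > 0$ and determinant $(x-y)/y > 0$. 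A single Schur-complement step then eliminates $\sset{v_1,u_1}$ while altering only the $(v_0,v_0)$-entry, reducing it by $(2x-2-y)/(x^2-xy-1)$; a short manipulation identifies this quantity as $\beta$.

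For the $\alpha$-case I would run the same scheme with $G'' := (G,01^3)$ rooted at $v_4$, so that \Cref{lem:path-ext} gives $\la_1(G,01^30^\infty) \ge -x$ if and only if $A_{G''} + xI - yE_{v_4,v_4} \succeq 0$. Now the seven vertices of $V(G'') \setminus V(G) = \sset{v_1,v_2,v_3,v_4,u_2,u_3,u_4}$ must be eliminated, but again $v_0$ has only a single edge (to $v_1$) into this set, so the Schur complement still modifies only the $(v_0,v_0)$-entry. The crucial structural observation is that the decoration consists of three copies of the same triangle gadget. Setting
\[
    T(z) \;:=\; \frac{2x - 2 - z}{x^2 - xz - 1},
\]
the same rational map that produced $\beta$ above, folding the triangle $\sset{v_{i-1}, v_i, u_i}$ inward (that is, Schur-complementing out $\sset{v_i, u_i}$) converts a diagonal value $x - z$ at $v_i$ into $x - T(z)$ at $v_{i-1}$. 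Applying this three times, starting from the diagonal $x - y$ at $v_4$, leaves $v_1$ with diagonal $x - T^3(y)$; and eliminating $v_1$, whose only remaining neighbor is $v_0$, subtracts $1/(x - T^3(y))$ from the $(v_0,v_0)$-entry. Hence the Schur complement produces $A_G + xI - \alpha E_{v_0,v_0}$ with $\alpha = 1/(x - T^3(y))$.

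What remains is the symbolic identification of $1/(x - T^3(y))$ with the explicit rational expression for $\alpha$ in \eqref{eqn:aby}. Because $T$ is a M\"obius transformation in $z$ over $\R(x)$ whose associated $2 \times 2$ matrix has polynomial entries of degree at most $2$ in $x$, computing $T^3$ amounts to cubing this matrix to obtain entries of degree at most $6$, after which the final step $z \mapsto 1/(x-z)$ produces the $(6,7)$ numerator/denominator degree pattern appearing in \eqref{eqn:aby}. I expect this polynomial identity to be the main obstacle in a careful write-up: it is conceptually routine but algebraically tedious, and best delegated to a computer algebra system. Along the way one also needs to confirm that each $2 \times 2$ block being inverted is positive definite for $x > 2$; this reduces to verifying $T^k(y) \in (0,1)$ for $k = 0, 1, 2$, which follows by induction from the elementary fact that $T$ maps $(0,1)$ into itself for $x > 2$, since $T(z) < 1$ is equivalent to $(x-1)(x-1-z) > 0$.
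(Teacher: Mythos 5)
Your proof is correct, and the $\beta$-case is essentially identical to the paper's: both invoke \Cref{lem:path-ext} to reduce to $A_{(G,1)} + xI - yE_{v_1,v_1} \succeq 0$, Schur-complement out the two new vertices, and observe that only the root's diagonal is modified by the sum of the entries of the inverse of a $2\times 2$ block. For the $\alpha$-case, however, you take a genuinely different computational route. The paper performs one Schur complement against the full $7 \times 7$ block $C_1 = A_{(\bullet, 1^3)} + xI - yE_{u,u}$, verifies $C_1 \succ 0$ by noting that $(\bullet, 1^3 0^\infty)$ is a line graph (so its smallest eigenvalue is at least $-2 > -x$, and \Cref{lem:path-ext-strict} applies), and then extracts $\alpha = (C_1^{-1})_{v_0,v_0}$ via Cramer's rule, computing a $6\times 6$ and a $7\times 7$ determinant directly. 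You instead peel off one triangle at a time with iterated $2\times 2$ Schur complements, each step being the same M\"obius map $T(z) = (2x-2-z)/(x^2-xz-1)$, arriving at $\alpha = 1/(x - T^3(y))$ and simultaneously exposing that $\beta = T(y)$. This is a nicer structural presentation: it reveals a recursion the paper's direct computation obscures, and it replaces the line-graph argument for $C_1 \succ 0$ with the elementary observation that $T$ preserves $(0,1)$ for $x>2$, so each successive $2\times 2$ block has positive diagonal and positive determinant. The trade-off is that you must still verify the symbolic identity $1/(x - T^3(y)) = \alpha$ against \eqref{eqn:aby}, which is the same kind of routine-but-tedious algebra the paper delegates to a calculation; a numerical spot check (say $x=3$, $y=(3-\sqrt 5)/2$) confirms both expressions give $\approx 0.4069$, and the M\"obius-matrix argument correctly predicts the degree-$6$/degree-$7$ numerator/denominator pattern.
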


\begin{proof}
    Suppose that $x > 2$. By \cref{lem:path-ext}, we know that
    \[
        \la_1(G, 01^30^\infty) \ge -x \text{ if and only if }
        M_1 := A_{(G, 01^3)} + xI - yE_{u,u} \succeq 0,
    \]
    where $u$ is the root of $(G, 01^3)$. We partition the matrix $M_1$ into the following blocks
    \[
        M_1 = \begin{pmatrix}
            A_{G} + xI & B_1 \\
            B_1^\intercal & C_1
        \end{pmatrix},
        \text{ where }
        C_1 = A_{(\bullet, 1^3)} + xI - yE_{u,u}.
    \]
    Since $(\bullet, 1^30^n)$ is a line graph for every $n \in \N$, we have $\la_1(\bullet, 1^30^\infty) \ge -2 > -x$, and so $C_1 \succ 0$ via \cref{lem:path-ext-strict}. Therefore $M_1 \succeq 0$ if and only if the Schur complement $A_G + xI - B_1C_1^{-1}B_1^\intercal \succeq 0$. Let $v_0$ be the vertex in $V(G, 0) \setminus V(G)$. Since the only nonzero entry of $B_1$ is its $(v, v_0)$ entry, the matrix $B_1C_1^{-1}B_1^\intercal$ simplifies to $(C_1^{-1})_{v_0, v_0}E_{v,v}$. Cramer's rule yields $(C_1^{-1})_{v_0,v_0} = \det C_1' / \det C_1$, where $C_1'$ is obtained from $C_1$ by removing the $v_0$-th row and column. A routine calculation yields
    \begin{align*}
        \det C_1' & = -(x^5 - 5x^3 + 2x^2 + 3x) y + (x^6 - 7x^4 + 4x^3 + 9x^2 - 6x - 1), \\
        \det C_1 & = -(x^6 - 7x^4 + 4x^3 + 9x^2 - 6x - 1) y + (x^7 - 9x^5 + 6x^4 + 19x^3 - 20x^2- 3x + 6),
    \end{align*}
    and so $(C_1^{-1})_{v_0,v_0} = \alpha$.

    By \cref{lem:path-ext}, we know that $\la_1(G, 10^\infty) \ge -x$ if and only if $M_2 := A_{(G, 1)} + xI - yE_{v_1,v_1} \succeq 0$, where $v_1$ is the root of $(G, 1)$. We partition $M_2$ into blocks
    \[
        M_2 = \begin{pmatrix}
            A_G + xI & B_2 \\
            B_2^\intercal & C_2
        \end{pmatrix},
        \text{ where }C_2 = A_{(\bullet, 0)} + xI - yE_{v_1,v_1}.
    \]
    A similar argument shows that $M_2 \succeq 0$ if and only if $A_G + xI - B_2C_2^{-1}B_2^\intercal \succeq 0$. Let $v_2$ be the other vertex in $V(G,1) \setminus V(G)$. Since the only nonzero entries of $B_2$ are its $(v,v_1)$ and $(v,v_2)$ entries, the matrix $B_2C_2^{-1}B_2^\intercal$ simplifies to $\left(\sum_{u_1,u_2} (C_2^{-1})_{u_1,u_2}\right) E_{v,v}$. A routine calculation yields $\sum_{u_1,u_2} (C_2^{-1})_{u_1,u_2} = \beta$.
\end{proof}

To prove \cref{lem:one-to-three}, we simply compare $\alpha$ and $\beta$ defined in \cref{lem:path-ext-plus}.

\begin{proof}[Proof of \cref{lem:one-to-three}]
    Let $G$ be the rooted graph $(F, a)$, and let $v$ be the root of $G$. For every $x > 2$, we know through \cref{lem:path-ext-plus} that $\la_1(G, 01^30^\infty) \ge -x$ if and only if $A_G + x I - \alpha E_{v,v} \succeq 0$, and $\la_1(G, 10^\infty) \ge -x$ if and only if $A_G + x I - \beta E_{v,v} \succeq 0$, where $\alpha$, $\beta$, and $y$ are defined as in \eqref{eqn:aby}.
    
    Since $E_{v,v} \succeq 0$, it suffices to show that $\alpha \ge \beta$ for $x \in (2, \la')$. Note that $x = y + 1/y$, and $x \in (2,\la')$ if and only if $y \in (\varphi^{-1/2},1)$, where $\varphi = (\sqrt 5 + 1)/2$. Replacing $x$ by $y + 1/y$ in $\alpha - \beta$, we obtain the rational function
    \[
        \frac{y(y-1)^2(y^{12} - 2y^{11} + 2y^{10} + 2y^9 - 5y^8 + 6y^7 - 4y^6 + 2y^5 + 3y^4 - 6y^3 + 4y^2 - 1)}{(y^2-y+1)(-y^{10} + y^9 + y^8 - 2y^7 + y^6 - y^5 +y^4 + 2y^3 - 3y^2 +y+ 1)}.
    \]
    It is routine to check via Sturm's theorem that each polynomial factor in the rational function is positive on $(7/9, 1) \approx (0.77778, 1)$, which contains $(\varphi^{-1/2},1) \approx (0.78615, 1)$.
\end{proof}

To prove \cref{lem:cover}, we gather the following computations about the smallest eigenvalues of binary rowing graphs of rooted graphs in \cref{fig:rooted-graphs}.

\begin{proposition}[Hoffman~\cite{H72}] \label{lem:f1}
    For every $n \in \N$, as $n \to \infty$, the smallest eigenvalue of the binary rowing graph $(F_1, 0^n)$ of the rooted graph $F_1$ in \cref{fig:rooted-graphs} decreases to $-\las$. \qed
\end{proposition}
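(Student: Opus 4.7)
The plan is to derive the proposition by combining Cauchy interlacing (for monotonicity) with the positive-semidefiniteness characterization of \Cref{lem:path-ext} (for the limit value), rather than reproducing Hoffman's original argument. Monotonicity is immediate: since $(F_1, 0^n)$ is an induced subgraph of $(F_1, 0^{n+1})$, Cauchy interlacing gives $\la_1(F_1, 0^{n+1}) \le \la_1(F_1, 0^n)$, so the sequence is non-increasing, and its limit $\la_1(F_1, 0^\infty)$ is the one already defined in the paragraph preceding \Cref{lem:one-to-three}. The task therefore reduces to identifying this limit with $-\las$.

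For that identification, I would apply \Cref{lem:path-ext} with $G = F_1$ and root $v = 4$: the condition $\la_1(F_1, 0^\infty) \ge -x$ is equivalent to $M(x) := A_{F_1} + xI - y E_{v,v} \succeq 0$, where $y = x/2 - \sqrt{x^2/4 - 1}$. Since $F_1 \cong P_4$ has spectral radius $\varphi < 2$, the matrix $A_{F_1} + xI$ is positive definite for every $x \ge 2$, and standard interlacing for rank-one perturbations then ensures that three of the four eigenvalues of $M(x)$ stay strictly positive. Hence $M(x) \succeq 0$ if and only if $\det M(x) \ge 0$. A direct $4 \times 4$ expansion yields
\[
    \det M(x) = x^4 - x^3 y - 3x^2 + xy + 1.
\]
Using the identity $x = y + 1/y$ (equivalently $y^2 - xy + 1 = 0$) to eliminate powers of $x$ collapses this to $(1 - y^4 - y^6)/y^4$. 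Setting $\rho := 1/y^2$ converts the vanishing condition $y^4 + y^6 = 1$ into $\rho^3 = \rho + 1$, with corresponding threshold $x = y + 1/y = \rho^{1/2} + \rho^{-1/2} = \las$. Since $y$ is strictly decreasing in $x$ on $[2, \infty)$ while $y \mapsto y^4 + y^6$ is strictly increasing on $(0, 1]$, we conclude that $\det M(x) \ge 0$ if and only if $x \ge \las$, which pins down $\la_1(F_1, 0^\infty) = -\las$ and completes the proof.

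The main obstacle is the algebraic simplification of the $4 \times 4$ determinant after the substitution $x = y + 1/y$; while the computation is entirely routine, it is easy to make a sign or arithmetic error, so one may wish to verify it mechanically or via Sturm's theorem, in the same spirit as the rational-function verification used in the proof of \Cref{lem:one-to-three}.
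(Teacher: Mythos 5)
The paper states this proposition as a citation to Hoffman~\cite{H72} and gives no proof of its own, so any complete argument you supply is necessarily a different route from the paper's. Your proof is correct and self-contained, and it has the merit of using only machinery already deployed elsewhere in the paper. Monotonicity is indeed immediate from Cauchy interlacing. For the limit, applying \Cref{lem:path-ext} to $G = F_1 \cong P_4$ (with vertices ordered along the path and the root $v$ in third position) reduces the problem to deciding when
\[
M(x) = \begin{pmatrix} x & 1 & 0 & 0 \\ 1 & x & 1 & 0 \\ 0 & 1 & x-y & 1 \\ 0 & 0 & 1 & x \end{pmatrix} \succeq 0.
\]
Since the spectral radius of $P_4$ is $\varphi < 2 \le x$, all four eigenvalues of $A_{F_1}+xI$ are positive, and the rank-one PSD perturbation $-yE_{v,v}$ can push at most one of them non-positive, so indeed $M(x)\succeq 0$ iff $\det M(x)\ge 0$. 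I checked the algebra: $\det M(x) = x^4 - x^3 y - 3x^2 + xy + 1$, and under $x = y + 1/y$ this collapses to $(1 - y^4 - y^6)/y^4$, whose vanishing is precisely $\rho^3 = \rho + 1$ with $\rho = 1/y^2$ and threshold $x = \rho^{1/2} + \rho^{-1/2} = \las$. The monotonicity you invoke (of $x \mapsto y$ and $y \mapsto y^4+y^6$) to convert this into a sharp threshold is also correct, and it pins down $\la_1(F_1, 0^\infty) = -\las$. Compared with Hoffman's original treatment, your argument is shorter and aligns naturally with how the paper already uses \Cref{lem:path-ext} and \Cref{lem:path-ext-strict} in \Cref{lem:path-ext-plus,lem:cover-concrete}. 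One cosmetic point: Cauchy interlacing only gives non-increase; strict decrease follows from the standard fact that enlarging a connected bipartite graph strictly lowers its smallest eigenvalue, but in any case only the limit value is used downstream, so this does not affect the paper.
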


\begin{proposition} \label{lem:cover-concrete}
    The rowing graphs of the rooted graphs $F_1, F_2, F_3$ in \cref{fig:rooted-graphs} satisfy the following properties:
    \begin{align*}
        \la_1(F_1, 1^8) & < -\tfrac{136}{67}, & \la_1(F_2, 1^7) & < -\tfrac{178}{87}, & \la_1(F_3, 1^4) & < -\la', \\
        \la_1(F_1, 0^\infty) & = -\las, & \la_1(F_2, 0^\infty) & \in (-\tfrac{136}{67}, -2), & \la_1(F_3, 0^\infty) & \in (-\tfrac{178}{87}, -2).
    \end{align*}
\end{proposition}

\begin{figure}
    \centering
    \begin{tikzpicture}[very thick, scale=0.45, baseline=(v.base)]
        \coordinate (v) at (0,0);
        \draw (19,0) -- (3,0) -- +(0,2) node[vertex, label={$-52$}]{};
        \draw (-1,0) node[vertex, label={below:$-34$}]{} -- (1,0) node[vertex, label={below:$-69$}]{} -- (3,0) node[vertex, label={below:$106$}]{} -- (5,2)node[vertex, label={$-12$}]{} -- (5,0)node[vertex, label={below:$-82$}]{} -- (7,2)node[vertex, label={$9$}]{} -- (7,0)node[vertex, label={below:$63$}]{} -- (9,2)node[vertex, label={$-7$}]{} -- (9,0)node[vertex, label={below:$-48$}]{} -- (11,2)node[vertex, label={$6$}]{} -- (11,0)node[vertex, label={below:$36$}]{} -- (13,2)node[vertex, label={$-5$}]{} -- (13,0)node[vertex, label={below:$-26$}]{} -- (15,2)node[vertex, label={$4$}]{} -- (15,0)node[vertex, label={below:$18$}]{} -- (17,2)node[vertex, label={$-4$}]{} -- (17,0)node[vertex, label={below:$-10$}]{} -- (19,2)node[vertex, label={$3$}]{} -- (19,0)node[root-vertex, label={below:$3$}]{};
    \end{tikzpicture}\par\medskip
    \begin{tikzpicture}[very thick, scale=0.45, baseline=(v.base)]
        \coordinate (v) at (0,0);
        \draw (3,0) -- (17,0);
        \draw (3,0) -- (5,2)node[vertex, label={$9$}]{} -- (5,0)node[vertex, label={below:$52$}]{} -- (7,2)node[vertex, label={$-7$}]{} -- (7,0)node[vertex, label={below:$-38$}]{} -- (9,2)node[vertex, label={$5$}]{} -- (9,0)node[vertex, label={below:$27$}]{} -- (11,2)node[vertex, label={$-4$}]{} -- (11,0)node[vertex, label={below:$-19$}]{} -- (13,2)node[vertex, label={$3$}]{} -- (13,0)node[vertex, label={below:$13$}]{} -- (15,2)node[vertex, label={$-3$}]{} -- (15,0)node[vertex, label={below:$-7$}]{} -- (17,2)node[vertex, label={$2$}]{} -- (17,0)node[root-vertex, label={below:$2$}]{};
        \draw (3,0) -- +(-2,2) -- +(-4,2);
        \draw (3,0) -- (-1,2) node[vertex, label={$14$}]{} -- (1,0) node[vertex, label={below:$28$}]{} -- (3,0) node[vertex, label={below:$-71$}]{} --  (3,2) node[vertex, label={$28$}]{} -- (1,2) node[vertex, label={$14$}]{};
    \end{tikzpicture}\qquad
    \begin{tikzpicture}[very thick, scale=0.45, baseline=(v.base)]
        \coordinate (v) at (0,0);
        \draw (7,0) -- (15,0);
        \draw (7,0) -- (9,2)node[vertex, label={$-5$}]{} -- (9,0)node[vertex, label={below:$-24$}]{} -- (11,2)node[vertex, label={$4$}]{} -- (11,0)node[vertex, label={below:$15$}]{} -- (13,2)node[vertex, label={$-3$}]{} -- (13,0)node[vertex, label={below:$-9$}]{} -- (15,2)node[vertex, label={$3$}]{} -- (15,0)node[root-vertex, label={below:$3$}]{};
        \draw (7,0) -- +(0,2) node[vertex, label={$-17$}]{};
        \draw (3,0) node[vertex, label={below:$6$}]{} -- (3,2) node[vertex, label={$-12$}]{} -- (5,2) node[vertex, label={$18$}]{} -- (5,0) node[vertex, label={below:$-26$}]{} -- (7,0) node[vertex, label={below:$35$}]{};
    \end{tikzpicture}
    \caption{Test vectors $z_1, z_2, z_3$ on $(F_1, 1^8), (F_2, 1^7), (F_3, 1^4)$} \label{fig:test-vectors}
\end{figure}
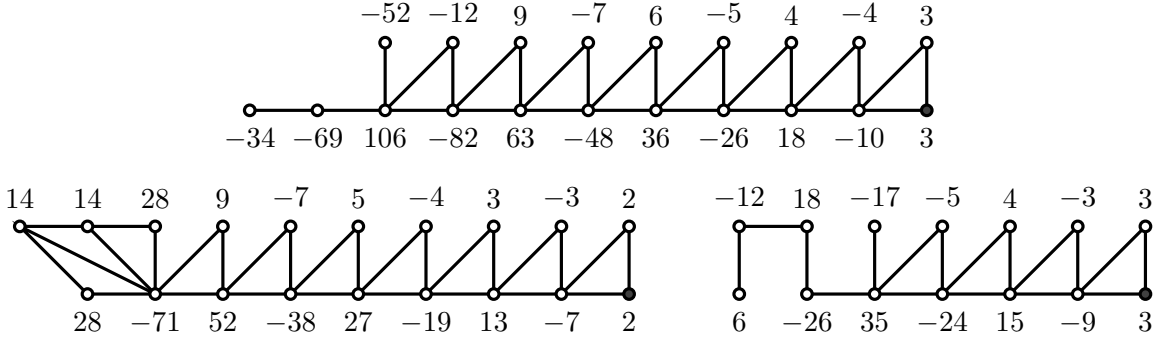

\begin{proof}
    We assign test vectors $z_1, z_2, z_3$ on $(F_1, 1^8),(F_2, 1^7),(F_3, 1^4)$ as in \cref{fig:test-vectors}. The corresponding Rayleigh quotients are $-\tfrac{72334}{35635}$, $-\tfrac{4315}{2109}$, $-\tfrac{1875}{911}$, which are less than $-\tfrac{136}{67}$, $-\tfrac{178}{87}$, $-\la'$ respectively.
    
    According to \cref{lem:f1}, we know that $\la_1(F_1, 0^\infty) = -\las$. Set $x_2 = \tfrac{136}{67}$ and $x_3 = \tfrac{178}{87}$. According to \cref{lem:path-ext-strict}, to prove that $\la_1(F_2, 0^\infty) > -x_2$ and $\la_1(F_3, 0^\infty) > -x_3$, it suffices to show that
    \[
        A_{F_2} + x_2I - y_2E_{v_2,v_2} \succ 0 \text{ and }
        A_{F_3} + x_3I - y_3E_{v_3,v_3} \succ 0,
    \]
    where $y_2 = x_2/2 - \sqrt{x_2^2/4-1}$, $y_3 = x_3/2 - \sqrt{x_3^2/4-1}$, and $v_2$ and $v_3$ are the roots of $F_2$ and $F_3$ respectively. Since $y_2 \approx 0.84151$, $y_3 \approx 0.80734$, $E_{v_2,v_2} \succeq 0$ and $E_{v_3,v_3} \succeq 0$, it suffices to show that
    \[
        M_2 := A_{F_2} + x_2I - y_2'E_{v_2,v_2} \succ 0 \text{ and }
        M_3 := A_{F_3} + x_3I - y_3'E_{v_3,v_3} \succ 0,
    \]
    where $y_2' = \tfrac{85}{101} \approx 0.84158$ and $y_3' = \tfrac{285}{353} \approx 0.80737$. Note that
    \[
        M_2 = \begin{pmatrix}
            \tfrac{136}{67} & 1 & & & 1 \\
            1 & \tfrac{136}{67} & 1 & & 1 \\
             & 1 & \tfrac{136}{67} & 1 & 1 \\
             & & 1 & \tfrac{136}{67} & 1 \\
             1 & 1 & 1 & 1 & \tfrac{8041}{6767}
        \end{pmatrix} \text{ and }
        M_3 = \begin{pmatrix}
            \tfrac{178}{87} & 1 &&&& \\
            1 & \tfrac{178}{87} & 1 &&&\\
            & 1 & \tfrac{178}{87} & 1 &&\\
            && 1 & \tfrac{178}{87} && 1\\
            &&&& \tfrac{178}{87} & 1\\
            &&& 1 & 1 & \frac{38039}{30711}
        \end{pmatrix}.
    \]
    
    Notice that the leading principal submatrix of $M_2$ of order $4$ and that of $M_3$ of order $5$ are positive definite, because both submatrices are of the form $A_G + xI$, where $G$ is a graph of maximum degree $2$, and $x > 2$. It is routine to compute that $\det M_2 = \tfrac{50854155}{136362635807}
    > 0$ and $\det M_3 = \tfrac{23578825817}{153070048956177} > 0$.

    According to \cref{lem:path-ext}, to prove that $\la_1(F_2, 0^\infty) < -2$ and $\la_1(F_3, 0^\infty) < -2$, it suffices to show that
    \[
        M_2' := A_{F_2} + 2I - E_{v_2,v_2} \not\succeq 0 \text{ and }M_3' := A_{F_3} + 2I - E_{v_3,v_3} \not\succeq 0.
    \]
    Note that \[
        M_2' = \begin{pmatrix}
            2 & 1 &&& 1 \\
            1 & 2 & 1 && 1 \\
            & 1 & 2 & 1 & 1 \\
            && 1 & 2 & 1 \\
            1 & 1 & 1 & 1 & 1
        \end{pmatrix} \text{ and }
        M_3' = \begin{pmatrix}
            2 & 1 &&&& \\
            1 & 2 & 1 &&& \\
            & 1 & 2 & 1 && \\
            && 1 & 2 && 1 \\
            &&&& 2 & 1 \\
            &&& 1 & 1 & 1
        \end{pmatrix}
    \]
    Finally, it is routine to compute that $\det M_2' = -1$ and $\det M_3' = -3$.
\end{proof}

\begin{proof}[Proof of \cref{lem:cover,lem:neg-2}]
    They follow from \cref{lem:cover-concrete} immediately.
\end{proof}

\section{Spherical two-distance sets} \label{sec:two-distance}

A \emph{spherical two-distance set} in $\R^d$ is a collection of unit vectors whose pairwise inner products lie in $\sset{\alpha,\beta}$. There was some partial success specifically in the regime where $-1 \le \beta < 0 \le \alpha < 1$. For fixed $\alpha$ and $\beta$, denote by $\nabd$ the maximum size of such a set in $\R^d$.

When $\beta = -\alpha$, the problem specializes to the famous question of \emph{equiangular lines} with fixed angles, whose high-dimensional behavior is now understood thanks to the breakthrough of Jiang, Tidor, Yao, Zhang and Zhao \cite{JTYZZ21}. Outside the equiangular setting, the landscape is much richer and remain largely open.

Recently, the limit $\lim_{d\to\infty}\nabd/d$ was determined in \cite{JP24,JTYZZ23} when $p \le 2$ or $\la < \las$, where the parameters are defined by
\[
    p = \left\lfloor{-\frac{\alpha}{\beta}}\right\rfloor + 1 \quad\text{and}\quad \la = \frac{1-\alpha}{\alpha-\beta}.
\]
In \cite{JTYZZ23}, a spherical two-distance set with prescribed inner products $\alpha$ and $\beta$ in $\R^d$ was constructed, and it was conjectured to provide the optimal $\nabd$ up to an additive error of $o(d)$. The limit $\lim_{d\to\infty}\nabd / d$ is believed to depend on the following spectral graph theoretic quantity:
\[
    k_p(\la) = \inf\dset{\frac{\abs{G}}{\mult(-\la, G)}}{G \in \Gp(\la)},
\]
where $\mult(-\la, G)$ denotes the multiplicity of $-\la$ as an eigenvalue of $G$.

\begin{conjecture}[Conjecture 1.11 of Jiang et al.\ \cite{JTYZZ23}] \label{conj:main}
    Fix $-1 \le \beta < 0 \le \alpha < 1$. Set $\la = (1-\alpha)/(\alpha-\beta)$ and $p = \lfloor -\alpha/\beta \rfloor + 1$. Then
    \[
        \nabd = \begin{cases}
            \dfrac{k_p(\la)d}{k_p(\la)-1} + o(d) & \text{if }k_p(\la) < \infty, \\
            d + o(d) & \text{otherwise}.
        \end{cases}
    \]
\end{conjecture}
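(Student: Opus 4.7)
The plan would be to split $\nabd$ into a matching upper bound and lower bound, both routed through the spectral graph theoretic quantity $k_p(\la)$. Following the framework of Jiang--Tidor--Yao--Zhang--Zhao and Jiang--Polyanskii, given a spherical two-distance set $\sset{v_1, \dots, v_N} \subset S^{d-1}$ with inner products in $\sset{\alpha,\beta}$, I would encode it as a signed graph $G$ on $N$ vertices whose edge $\sset{i,j}$ is negative when $\langle v_i, v_j \rangle = \beta$ and positive when $\langle v_i, v_j \rangle = \alpha$. The Gram matrix decomposes as a positive linear combination of $I$ and the signed adjacency matrix, so $\la_1(G) \ge -\la$ with $\la = (1-\alpha)/(\alpha-\beta)$; meanwhile the fact that any clique of vectors with pairwise inner product $\beta$ has size at most $\lfloor -\alpha/\beta \rfloor + 1 = p$ forces $G \in \Gp(\la)$. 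The rank bound on the Gram matrix then gives $\mult(-\la, G) \ge N - d$.

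For the upper bound, I would argue that $\abs{H}/\mult(-\la, H) \ge k_p(\la)$ holds by definition across a structured family of induced subgraphs $H \subseteq G$, and then aggregate these local inequalities. A careful packing--averaging argument, summing multiplicities across many nearly disjoint induced copies as in \cite{JTYZZ23}, upgrades this to $N \le k_p(\la)\,d/(k_p(\la)-1) + o(d)$ when $k_p(\la) < \infty$, and to $N \le d + o(d)$ otherwise. The degenerate case $k_p(\la) = \infty$ would reduce to showing that $\mult(-\la, G) = o(\abs{G})$ uniformly for $G \in \Gp(\la)$, via a compactness argument on the space of locally finite signed graphs with the Benjamini--Schramm-style limit topology.

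For the lower bound, I would lift a near-optimal witness of the infimum to a spherical configuration: pick $G_0 \in \Gp(\la)$ with $\abs{G_0}/\mult(-\la, G_0)$ close to $k_p(\la)$, take a large tensor blow-up whose $(-\la)$-eigenspace has dimension close to $d$, orthonormalize inside that eigenspace, and translate each coordinate by the all-ones direction to manufacture unit vectors whose pairwise inner products are exactly $\alpha$ and $\beta$. The $p$-coloring of $G_0$ directly produces the required $p$-cliques of vectors at inner product $\beta$.

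The main obstacle is exactly what the present paper isolates. For $\la \ge \las$, \Cref{thm:main-forb} precludes a finite forbidden-subgraph description of $\Gp(\la)$, and \Cref{thm:main} constructs $3$-colorable graphs with smallest eigenvalues dense in $(-\infty, -\las)$. Consequently the infimum defining $k_p(\la)$ need not be attained on any finite graph, and the candidate extremal configurations may involve arbitrarily large signed graphs with intricate structure. Both the matching upper bound (which must rule out pathological near-extremal families) and the explicit construction achieving the lower bound (which requires stable eigenvectors inside enormous blow-ups) become technically delicate, and a full proof in this regime likely demands methods beyond the forbidden-subgraph paradigm that the present paper shows to be insufficient.
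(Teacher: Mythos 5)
The statement you were asked to prove is \Cref{conj:main}, which is stated in this paper precisely as a \emph{conjecture} (Conjecture~1.11 of \cite{JTYZZ23}), not as a theorem. The paper does not prove it, so there is no ``paper's own proof'' to compare your attempt against. In fact, the paper's actual contribution cuts the other way: \Cref{thm:main-forb} and \Cref{thm:main} show that for $p \ge 3$ and $\la \ge \las$ the family $\Gp(\la)$ admits \emph{no} finite forbidden-subgraph characterization within $\Gp$, which is a negative result obstructing the route via \Cref{conj:forb}. You do correctly identify this obstruction at the end of your write-up, but that is a diagnosis of why the problem is open, not a proof.

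Beyond the category error, what you have written is a strategy sketch rather than a proof: the load-bearing steps are deferred to ``a careful packing--averaging argument,'' ``a compactness argument on the space of locally finite signed graphs,'' and a tensor blow-up whose existence and spectral stability are asserted without construction. Two concrete issues are worth flagging. First, the claim that bounding the size of $\beta$-cliques ``forces $G \in \Gp(\la)$'' is incomplete --- $p$-colorability of the associated signed graph requires that the negative edges form (after deleting $O(1)$ vertices) a disjoint union of small cliques, which in \cite{JTYZZ23} is a nontrivial Ramsey-type structural step, not a consequence of the clique bound alone; also note the bound $\lfloor -\alpha/\beta\rfloor + 1$ is not the naive $\lfloor -1/\beta\rfloor + 1$. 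Second, the upper bound in the regime $\la \ge \las$, $p \ge 3$ is exactly where \Cref{conj:forb} lacks a candidate finite family $\F$, so the averaging step has no forbidden-subgraph input to run on; this is the open core of the problem and your sketch does not supply a replacement mechanism. In short, the conjecture remains open, the present paper proves an obstruction rather than the conjecture itself, and your proposal, while a fair summary of the known framework and its limitations, does not close the gap.
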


To prove \cref{conj:main}, a forbidden-subgraph approach was discovered in \cite{JTYZZ23} that reduces the estimation of $\nabd$ to that of the following quantity. See \cite[Section 5]{JTYZZ23} for the reduction.

\begin{definition}[Definition 5.2 of Jiang et al.\ \cite{JTYZZ23}] \label{def:mpf}
    Given $p \in \N^+$ and a family $\F$ of signed graphs, let $M_{p, \F}(\la, N)$ be the maximum possible value of $\mult(-\la, G)$ over all $p$-colorable signed graphs $G$ on at most $N$ vertices that do not contain any member of $\F$ as an induced subgraph and satisfy $\la_{p+1}(G) \ge -\la$. Here $\la_{p+1}(G)$ denotes the $(p+1)$-th smallest eigenvalue of $G$.
\end{definition}

\begin{conjecture}[Conjecture 5.4 of Jiang et al.\ \cite{JTYZZ23}] \label{conj:forb}
    For every $\la > 0$ and $p \in \N^+$, there exists a finite family $\F$ of signed graphs with $\la_1(F) < -\la$ for each $F \in \F$ such that
    \[
        M_{p,\F}(\la, N) \le \begin{cases}
            N / k_p(\la) + o(N) & \text{if }k_p(\la) < \infty, \\
            o(N) & \text{otherwise}.
        \end{cases}
    \]
\end{conjecture}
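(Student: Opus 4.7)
My plan is to mirror the forbidden-subgraph machinery developed in \cite{JTYZZ21,JTYZZ23} for equiangular lines, and to promote it to general $p$-colorable signed graphs. The guiding principle is that \Cref{conj:forb} should follow from a structural decomposition theorem: any $p$-colorable signed graph $G$ on $N$ vertices with $\la_{p+1}(G) \ge -\la$ and avoiding a suitable finite family $\F$ admits a partition of most of its vertices into \emph{extremal blocks}, that is, induced signed subgraphs $H \in \Gp(\la)$ each with ratio $\abs{H}/\mult(-\la, H)$ essentially equal to $k_p(\la)$, in such a way that the $-\la$-eigenspace of $G$ decomposes, up to lower-order error, block-diagonally across the pieces. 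Summing over blocks would then yield the bound $\mult(-\la, G) \le N/k_p(\la) + o(N)$.

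Concretely, I would proceed in three steps. First, I would show that any $-\la$-eigenvector $v$ of $G$, once the ambient graph avoids all members of $\F$, is forced to be supported on a union of induced sub-signed-graphs that themselves attain $-\la$ as their smallest eigenvalue; this is an ``augmentation'' argument combining Cauchy interlacing with the variational definition of $k_p(\la)$. Second, I would use the hypothesis $\la_{p+1}(G) \ge -\la$ together with interlacing on the top of the spectrum to argue that two distinct minimal extremal subgraphs can intersect in at most $o(N)$ vertices in aggregate; the role of $\F$ is exactly to forbid those small configurations that would permit significant overlap or ``local over-counting'' of eigenvectors. Third, I would sum the contributions: each extremal block $H$ contributes at most $\abs{H}/k_p(\la)$ to the multiplicity, yielding the desired bound. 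When $k_p(\la) = \infty$, the first step alone should force any $-\la$-eigenvector to be supported on a finite union of uniformly small sets of total size $o(N)$.

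The main obstacle, in my view, is producing a genuinely finite $\F$ that enforces the near-disjointness of extremal blocks. \Cref{thm:main} of the present paper shows that no finite $\F$ can characterize $\Gp(\la)$ for $p \ge 3$ and $\la \ge \las$, so one cannot simply forbid every obstruction to membership in $\Gp(\la)$; instead, one must find a finite $\F$ which controls the quantitative statistic $\mult(-\la, G)$ without exactly carving out $\Gp(\la)$. Reconciling this weaker local control with the rigid structural decomposition demanded by the first two steps is where I would expect the proof to be genuinely hard, and is plausibly why \Cref{conj:forb} remains open even after the equiangular-lines breakthrough of \cite{JTYZZ21}.
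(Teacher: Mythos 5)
The statement you are addressing is a \emph{conjecture} (Conjecture 5.4 of \cite{JTYZZ23}), and the paper does not prove it; it remains open. The paper only records that the conjecture is \emph{confirmed} in the regime $\la < \las$, where by \Cref{thm:signed-forb} one may take $\F$ to be a finite forbidden subgraph characterization of the family of signed graphs with smallest eigenvalue at least $-\la$, so that $M_{p,\F}(\la,N)$ collapses to $\max\dset{\mult(-\la,G)}{\abs{G}\le N,\ G \in \Gp(\la)}$ and the bound follows by a simple argument from \cite[Section~4]{JP24}. The paper's actual contribution (\Cref{thm:main,thm:main-forb}) is negative: for $p \ge 3$ and $\la \ge \las$ there is no finite forbidden subgraph characterization of $\Gp(\la)$ within $\Gp$, so the one known route to the conjecture is closed off in that regime.

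Your proposal is a plan, not a proof. The decomposition into extremal blocks, the ``augmentation'' argument, and the near-disjointness step are all unsubstantiated; more importantly, you yourself identify the crux --- producing a finite $\F$ that controls $\mult(-\la,G)$ without carving out $\Gp(\la)$ exactly --- and leave it entirely open. That is precisely the obstruction the present paper establishes, so your sketch cannot be completed along the lines you describe without a genuinely new idea for the regime $\la \ge \las$, $p \ge 3$. If you intended only to prove the conjecture for $\la < \las$, you should say so and then follow the much shorter argument via \Cref{thm:signed-forb}; as written, your text should be presented as a discussion of why the conjecture is hard, not as a proof of it.
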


When $\la < \las$, according to \cref{thm:signed-forb}, the family $\F$ in \cite{JP24} was taken as a finite forbidden subgraph characterization of the family of signed graphs with smallest eigenvalue at least $-\la$ (within the class of signed graphs). In view of \cref{def:mpf}, for this specific $\F$, the quantity simplifies to
\[
    M_{p, \F}(\la, N) = \max\dset{\mult(-\la, G)}{\abs{G} \le N \text{ and }G \in \Gp(\la)},
\]
which in turn confirms \cref{conj:forb} through a simple argument. We refer the readers to \cite[Section 4]{JP24} for more details.

Since a $p$-colorable graph $G$ never contains a subgraph $F$ that is not $p$-colorable, it is more economical to choose $\F$ to be a forbidden subgraph characterization of $\Gp(\la)$ within $\Gp$. This is the motivation behind \cref{prob:main}. Our \cref{thm:main} therefore shows that restricting to $p$-colorable signed graphs yields no new finite forbidden subgraph characterizations once $p \ge 3$.

\section*{Acknowledgements} We thank Xiaonan Liu for valuable early-stage discussions and the referee for a careful and insightful review. We also gratefully acknowledge travel support from the LSU Provost's Fund for Innovation in Research --- Seminar/Collaborator Support Fund.

\bibliographystyle{plain}
\bibliography{3colorable}

\appendix

\section{Proofs of \cref{lem:l,lem:m}} \label{sec:app}

In the proofs of \cref{lem:l,lem:m}, we work with vectors $x$ on the vertex set of a rowing graph, whose path attached to its rooted graph is denoted by $v_0 v_1 \dots$, and we abuse notation and write $x_i$ in place of $x_{v_i}$.

\begin{proof}[Proof of \cref{lem:l}]
    Take $\ell \in \N^+$ such that $\ell > 3 / \eps$ and $\la_1(\bullet, 0^\ell) = 2\cos((\ell-1)\pi/\ell) < -2 + \eps / 3$. Let $v_0 \dots v_{m + \ell + n}$ denote the path attached to the root of $F$ in the rowing graph $(F, a0^\ell b)$, where $a \in \N^m$ and $b \in \N^n$, and let $x \colon V(F, a0^\ell b) \to \R$ be a unit eigenvector associated with the smallest eigenvalue of $(F, a0^\ell b)$. Choose $k \in \sset{0,\dots,\ell-1}$ such that $x_{m + k}x_{m + k + 1}$ reaches the minimum in absolute value. In particular, using the inequality $\abs{x_{m+i}x_{m+i+1}} \le (x_{m+i}^2 + x_{m+i+1}^2)/2$, we obtain
    \begin{equation} \label{eqn:rowing-b}
        \abs{x_{m + k}x_{m + k + 1}} \le \frac{1}{\ell}\sum_{i=0}^{\ell-1}\abs{x_{m+i}x_{m+i+1}} \le \frac{1}{\ell}\sum_{i=0}^\ell x_{m+i}^2 \le \frac{1}{\ell} < \frac{\eps}{3}.
    \end{equation}

    Notice that removing the edge $v_{m+k}v_{m+k+1}$ disconnects $(F, a0^\ell b)$ into two subgraphs, one of which is $(F, a0^k)$, while the other is $(\bullet, 0^{\ell-k-1}b)$,  which is the line graph of a caterpillar tree. Clearly
    \begin{equation} \label{eqn:rowing-b-1}
        \la_1(F, a0^k) \ge \la_1(F, a0^\ell).
    \end{equation}
    Together with $\la_1(F, a0^\ell) \le \la_1(\bullet, 0^\ell) < -2 + \eps/3$ and $\la_1(\bullet, 0^{\ell-k-1}b) \ge -2$, we obtain
    \begin{equation} \label{eqn:rowing-b-2}
        \la_1(\bullet, 0^{\ell-k-1}b) > \la_1(F, a0^\ell) - \eps/3.
    \end{equation}

    Let $x_L$ and $x_R$ be the unit eigenvector $x$ restricted to $V(F, a0^k)$ and $V(\bullet, 0^{\ell-k-1}b)$ respectively. Finally, we bound the smallest eigenvalue of $(F, a0^\ell b)$ as follows:
    \begin{align*}
        \la_1(F, a0^\ell b) & \stackrel{\phantom{\rowingrefs}}{=} x^\intercal A_{(F, a0^\ell b)}x \\
        & \stackrel{\phantom{\rowingrefs}}{=} x_L^\intercal A_{(F, a0^k)} x_L + 2x_{m+k}x_{m+k+1} + x_R^\intercal A_{(\bullet, 0^{\ell-k-1}b)} x_R \\
        & \stackrel{\phantom{\rowingrefs}}{\ge} \la_1(F, a0^k) x_L^\intercal x_L + 2x_{m+k}x_{m+k+1} + \la_1(\bullet, 0^{\ell-k-1}b) x_R^\intercal x_R \\
        & \stackrel{\rowingrefs}{>} \big(\la_1(F, a0^\ell) - \eps/3\big)\big(x_L^\intercal x_L + x_R^\intercal x_R\big) - 2\eps/3 \\
        & \stackrel{\phantom{\rowingrefs}}{=} \la_1(F, a0^\ell) - \eps. \qedhere
    \end{align*}
\end{proof}

\begin{proof}[Proof of \cref{lem:m}]
    By Weyl's inequality, we obtain that for every $a \in \N^*$,
    \begin{equation} \label{eqn:rowing-d-1}
        \la_1(F, a) \ge \la_1(F) + \la_1(\bullet, a) \ge \la_1(F) - 2.
    \end{equation}
    Take $m \in \N^+$ such that $m > (4 - \la_1(F))/\eps$. Suppose that $n \ge m$. Let $v_0 \dots v_n$ denote the path attached to the root of $F$ in the rowing graph $(F, a)$, where $a \in \N^n$, and let $x \colon V(F, a) \to \R$ be a unit eigenvector associated with the smallest eigenvalue of $(F, a)$. Choose $k \in \sset{1, \dots, m}$ such that $x_{k-1}$ reaches the minimum in absolute value. In particular,
    \begin{equation} \label{eqn:rowing-d-2}
        x_{k-1}^2 \le \frac{1}{m} \sum_{i=0}^{m-1}x_i^2 \le \frac{1}{m} < \frac{\eps}{4 - \la_1(F)}.
    \end{equation}

    Let $v_0 \dots v_{k-1}v_* v_{k} \dots v_n$ denote the path attached to the root of $F$ in the rowing graph $(F, \tilde{a})$, where $\tilde{a} = a_1 \dots a_{k-1} 0 a_{k} \dots a_n$. We naturally view the vertex set of $(F, \tilde{a})$ as $V(F, a) \cup \{ v_*\}$, and we extend the unit eigenvector $x\colon V(F, a) \to \R$ to $\tilde{x}\colon V(F, \tilde{a}) \to \R$ by setting $\tilde{x}_* = x_{k-1}$. The Rayleigh principle says that $\la_1(F, \tilde{a})$ is at most
    \begin{multline*}
        \frac{\tilde{x}^\intercal A_{F, \tilde{a}} \tilde{x}}{\tilde{x}^\intercal \tilde{x}}
        = \frac{x^\intercal A_{F, a} x + 2x_{k-1}^2}{x^\intercal x + x_{k-1}^2}
        = \frac{\la_1(F, a) + 2x_{k-1}^2}{1 + x_{k-1}^2} \\
        \le \la_1(F, a) + (2-\la_1(F, a))x_{k-1}^2
        \stackrel{\rowingdrefs}{<} \la_1(F, a) + \eps. \qedhere\kern-10pt
    \end{multline*}
\end{proof}

\end{document}